\documentclass[11pt,a4paper]{article}
\usepackage{amsfonts}
\usepackage[utf8]{inputenc}
\usepackage[T1]{fontenc}
\usepackage{lmodern}
\usepackage[english]{babel}
\usepackage{amsmath,amssymb,amsthm,mathtools}
\usepackage{graphicx}
\usepackage{float}
\usepackage[margin=2.5cm]{geometry}
\usepackage{microtype}
\usepackage{enumitem}
\usepackage{booktabs}
\usepackage{xcolor}
\usepackage{listings}
\usepackage{hyperref}

\hypersetup{
  colorlinks=true,
  linkcolor=blue!55!black,
  citecolor=blue!55!black,
  urlcolor=blue!55!black
}
\definecolor{codebg}{rgb}{0.97,0.97,0.97}
\definecolor{codekw}{rgb}{0.10,0.20,0.60}
\definecolor{codecom}{rgb}{0.25,0.45,0.25}
\definecolor{codestr}{rgb}{0.55,0.20,0.20}
\definecolor{codenum}{rgb}{0.35,0.35,0.35}
\lstdefinestyle{pystyle}{
  language=Python,
  backgroundcolor=\color{codebg},
  basicstyle=\ttfamily\footnotesize,
  keywordstyle=\color{codekw}\bfseries,
  commentstyle=\color{codecom}\itshape,
  stringstyle=\color{codestr},
  numberstyle=\tiny\color{codenum},
  numbers=left,
  numbersep=8pt,
  showstringspaces=false,
  breaklines=true,
  breakatwhitespace=false,
  frame=single,
  rulecolor=\color{black!30},
  tabsize=4,
  captionpos=b,
  xleftmargin=1.5em,
  framexleftmargin=1.5em
}
\newtheorem{theorem}{Theorem}[section]
\newtheorem{corollary}[theorem]{Corollary}
\theoremstyle{definition}
\newtheorem{definition}[theorem]{Definition}
\newtheorem{remark}[theorem]{Remark}

\newtheorem{proposition}[theorem]{Proposition}
\newtheorem{lemma}[theorem]{Lemma}

\DeclareMathOperator{\Var}{Var}

\begin{document}

\title{\bfseries Riccati--Gamma Dynamics for Concavity and Asymptotics of
Generalized Dirichlet Eta Functions}
\author{ \textbf{Drago\c{s}-P\u{a}tru Covei} \\
{\small Department of Applied Mathematics}\\
{\small Bucharest University of Economic Studies}\\
{\small 6 Pia\c{t}a Roman\u{a}, 010374 Bucharest, Romania}\\
{\small \href{mailto:coveidragos@yahoo.com}{\texttt{%
coveidragos@yahoo.com}} }}
\date{\today}
\maketitle

\begin{abstract}
\noindent We develop a unified analytical and dynamical framework for the
qualitative study of the one-parameter family of generalized Dirichlet eta
functions $\eta_{a}(t)=\sum_{m\ge 0}(-1)^{m}(am+1)^{-t}$, $a>0$, $t>0$,
which specialises to the classical Dirichlet eta and beta functions for $a=1$
and $a=2$. Building on a Mellin--Laplace representation of $\eta_{a}$ as the
expectation $\mathbb{E}[f_{a}(X_{t})]$ of a scaled logistic function
evaluated along a standard Gamma process $(X_{t})_{t\ge 0}$, we prove that
the logarithmic derivative $\varphi_{a}(t)=\eta_{a}^{\prime }(t)/\eta_{a}(t)$
satisfies a non\-homogeneous Riccati equation whose forcing term is strictly
negative on $(0,\infty)$. This single dynamical inequality yields, in one
step, the strict concavity and strict log-concavity of $\eta_{a}$ on $%
(0,\infty)$, the positivity and monotonicity of $\varphi_{a}$, and the exact
leading-order expansion $\varphi_{a}(t)=\log(a+1)(a+1)^{-t}+O((a+2)^{-t})$
as $t\to\infty$. We further compute the exact asymptotic ratio $%
\varphi_{a}(t)/\varphi_{a,e}(t)\to 2/\log(a+1)$ as $t\to\infty$, where $%
\varphi_{a,e}(t):=-\eta_{a}^{\prime \prime }(t)/(2\eta_{a}(t))$; in
particular, for $a<e^{2}-1$ (which covers the classical Dirichlet eta and
beta cases $a=1,\,2$) this ratio exceeds one, yielding the trapping
inequality $0<\varphi_{a,e}(t)<\varphi_{a}(t)$ on a half-line $%
(T_{*},\infty) $, which is equivalent to the structural curvature inequality 
$\eta_{a}^{\prime \prime }(t)+2\eta_{a}^{\prime }(t)>0$. Finally, we present
a self-contained, rigorously justified geometric-rate algorithm (rate $1/3$)
for the $k$-th derivative of $\eta_{a}$, together with a sharp combinatorial
error bound. High-precision numerical experiments confirm every theoretical
statement. As a novel application, we demonstrate that the Riccati--Gamma
dynamics of $\varphi_{a}(t)$ and $\eta_{a}(t)$ provide a principled
mathematical mechanism for musical synthesis: the trajectory of the Riccati
field governs the pitch sequence of a melody, while the values of $\eta_{a}$
control note durations, yielding a fully documented Python implementation that
generates both a Beatles-style melodic theme and a complete five-minute
musical composition whose harmonic and rhythmic architecture are entirely
determined by the functions $\eta_{a}$, $\varphi_{a}$, and their
asymptotic behaviour.\medskip

\noindent\textbf{Keywords:}~Dirichlet eta function; Dirichlet beta function;
Riccati equation; Gamma process; log-concavity; finite differences;
mathematical music; algorithmic composition.

\noindent\textbf{MSC2020:}~11M41, 11M06, 60G51, 34A34, 39A70.
\end{abstract}



\section{Introduction}

\label{sec:intro}

\subsection{Historical background and motivation}

The qualitative analysis of Dirichlet series lies at the very heart of
analytic number theory. The classical Dirichlet eta function 
\begin{equation*}
\eta (t)\;=\;\sum_{n=1}^{\infty }\frac{(-1)^{n-1}}{n^{t}},\qquad t>0,
\end{equation*}%
is intimately related to the Riemann zeta function via the well-known
identity 
\begin{equation*}
\eta (t)=(1-2^{1-t})\zeta (t),
\end{equation*}%
and the Dirichlet beta function 
\begin{equation*}
\beta (t)\;=\;\sum_{n=0}^{\infty }\frac{(-1)^{n}}{(2n+1)^{t}},\qquad t>0,
\end{equation*}%
controls a wealth of arithmetic constants (for instance, Catalan's constant $%
G=\beta (2)$) and the distribution of primes in arithmetic progressions.
Recent breakthroughs by Alexeev, Barreto, Li, Lichtman, Price, Shah, Tang
and Tao~\cite{tao} on the Erd\H{o}s--S\'{a}rk\"{o}zy--Szemer\'{e}di
conjectures (Erd\H{o}s problems \#164, \#1196, \#1217) crucially exploit the
strict monotonicity and concavity properties of such Dirichlet-type series,
as established earlier by Adell and Lekuona~\cite{dirichlet}. The geometric
properties at stake -- positivity, monotonicity, strict concavity,
log-concavity, and quantitative asymptotic decay of the logarithmic
derivative -- are precisely the structural ingredients needed to make the
combinatorial machinery of~\cite{tao} work.

Adell and Lekuona~\cite{dirichlet} proved, by means of probabilistic
integral representations, that for every $a>0$ the function 
\begin{equation*}
\eta _{a}(t):=\sum_{m\geq 0}(-1)^{m}(am+1)^{-t}
\end{equation*}%
is strictly increasing, strictly concave, and strictly log-concave on $%
(0,\infty )$. They also provided a geometric-rate algorithm for computing $%
\eta _{a}^{(k)}(t)$ using progressive finite differences. Their results are
based on the sequence of papers \cite{Adell2013}-\cite{Wang1998}. The
arguments, although correct, combine several distinct ingredients (positive
linear operators, Vandermonde identities, ad-hoc inequalities for partial
sums) and do not exhibit a single unifying mechanism behind concavity.

\subsection{The Riccati--Schr\"odinger--HJB triality}

In~\cite{triality}, the author established a one-to-one dynamical
correspondence between three nonlinear radial equations -- the Schr\"{o}%
dinger, the Hamilton--Jacobi--Bellman, and the Riccati equations -- which we
shall refer to as the \emph{radial triality}. Within this triality, a
strictly positive solution $u$ of the radial Schr\"{o}dinger equation is 
\emph{log-concave}; the value function 
\begin{equation*}
z(r)=-2\sigma ^{2}\log u(r)
\end{equation*}
of the associated HJB equation is \emph{concave}; and the logarithmic
derivative 
\begin{equation*}
\varphi (r)=u^{\prime }(r)/u(r)
\end{equation*}
satisfies a \emph{Riccati equation} and is \emph{monotone}. The same
qualitative triad -- positivity, monotonicity, strict
concavity/log-concavity -- is precisely the one studied for Dirichlet series.

This structural coincidence raises the natural question:

\begin{quote}
\emph{Can the geometric properties of Dirichlet eta and beta functions be
derived from a single Riccati-type dynamical mechanism, and can such a
mechanism produce sharp quantitative information (asymptotic manifolds,
trapping inequalities, curvature bounds) inaccessible to the classical
approach?}
\end{quote}

The present paper answers both questions in the affirmative.

\subsection{Main contributions and novelty}

\label{ssec:main-contrib}

The contributions of this work, made fully rigorous in the sections that
follow, are as follows. Let $a\in(0,\infty)$, $t>0$, and let $%
f_{a}(x)=(1+e^{-ax})^{-1}$ denote the scaled logistic function.

\begin{enumerate}
\item \emph{Probabilistic representation} (Theorem~\ref{thm:prob-rep}). We
prove the Mellin--Laplace identity $\eta_{a}(t)=\mathbb{E}\!\left[%
f_{a}(X_{t})\right]$, where $(X_{t})_{t\ge 0}$ is a standard Gamma process.

\item \emph{Riccati identity} (Theorem~\ref{thm:riccati-identity}). The
logarithmic derivative $\varphi_{a}(t)=\eta_{a}^{\prime }(t)/\eta_{a}(t)$
satisfies the non\-homogeneous Riccati equation $\varphi_{a}^{\prime
}(t)+\varphi_{a}^{2}(t)=q_{a}(t)$ with forcing $q_{a}(t)=\eta_{a}^{\prime
\prime }(t)/\eta_{a}(t)$.

\item \emph{Concavity from a single dynamical inequality} (Theorem~\ref%
{thm:main-concavity}). The strict concavity of the logistic function
transfers, via the stochastic differentiation formula for Gamma processes,
to the strict negativity of $q_{a}$, and hence to the single inequality 
\begin{equation*}
\varphi _{a}^{\prime }(t)<-\varphi _{a}^{2}(t)\leq 0,
\end{equation*}
which simultaneously yields strict concavity and strict log-concavity of $%
\eta _{a}$ on $(0,\infty )$.

\item \emph{Phase portrait and asymptotic manifold} (Theorem~\ref%
{thm:riccati-dynamics}). We prove the exact leading-order expansion 
\begin{equation*}
\varphi _{a}(t)=\log (a+1)(a+1)^{-t}+O((a+2)^{-t})\text{ as }t\rightarrow
\infty 
\end{equation*}%
and identify the stable equilibrium manifold of the unforced Riccati flow.

\item \emph{Asymptotic ratio and curvature inequality} (Proposition~\ref%
{prop:trapping}, Corollary~\ref{cor:curvature}). Setting $\varphi
_{a,e}(t):=-q_{a}(t)/2$, we prove the exact limit 
\begin{equation*}
\lim_{t\rightarrow \infty }\frac{\varphi _{a}(t)}{\varphi _{a,e}(t)}=\frac{2%
}{\log (a+1)}.
\end{equation*}%
For $a<e^{2}-1$ this limit exceeds one, hence there exists $T_{\ast }(a)>0$
such that 
\begin{equation*}
0<\varphi _{a,e}(t)<\varphi _{a}(t)\text{ for }t>T_{\ast }(a),
\end{equation*}
equivalently $\eta _{a}^{\prime \prime }(t)+2\eta _{a}^{\prime }(t)>0$ for $%
t>T_{\ast }(a)$.

\item \emph{Fast geometric algorithm.} For every $k\in \mathbb{Z}_{+}$ we
know from \cite[Theorem~\ref{thm:fast-comp} and Theorem~\ref{thm:error-bound}%
]{dirichlet} the geometric-rate series 
\begin{equation*}
\eta _{a}^{(k)}(t)=\tfrac{2}{3}\sum_{n\geq 0}3^{-n}c_{a,t,k}(n)
\end{equation*}%
with explicit coefficients $c_{a,t,k}(n)$, and the sharp combinatorial bound 
\begin{equation*}
|c_{a,t,k}(n)|\leq 2\binom{n+k}{k}\log ^{k}(a(n\wedge k)+1).
\end{equation*}%
Then, we give the truncation error of the geometric algorithm Theorem \ref{cor:trunc-error}.

\item \emph{Reproducible numerical validation} (Section~\ref{sec:numerical}%
). Every theoretical statement above is independently confirmed by
high-precision Python simulations whose source code, fully documented, is
reproduced.
\end{enumerate}

The novelty of the approach with respect to~\cite{dirichlet} is twofold.
First, the strict concavity and log-concavity of $\eta_{a}$ are obtained
from a \emph{single} differential inequality of Riccati type, instead of
from a collection of separate analytic estimates. Second, the Riccati
framework yields \emph{new quantitative information}, namely
contributions~(C4)--(C5), which to the best of our knowledge are absent from
the literature and which exhibit a deep structural parallel with the radial
triality of~\cite{triality}.

\subsection{Organisation of the paper}

Section~\ref{sec:prelim} fixes the notation and collects the preliminary
results from probability and finite-difference calculus that will be used
throughout. Section~\ref{sec:prob-rep} establishes the Gamma--Mellin
probabilistic representation of $\eta _{a}$. Section~\ref%
{sec:riccati-concavity} develops the Riccati structure and proves the
concavity theorem. Section~\ref{sec:dynamics} studies the phase portrait of
the Riccati field and proves the trapping bound and curvature inequality.
Section~\ref{sec:fast-algorithm} contains the proof of the fast geometric
algorithm and the sharp error bound. Section~\ref{sec:numerical} presents
the numerical validation, and the information for the fully documented
Python code.


\section{Notation and Preliminaries}

\label{sec:prelim}

\subsection{Notation}

Throughout the paper we use the following conventions:

\begin{itemize}
\item $\mathbb{N}=\{1,2,3,\dots \}$, $\mathbb{Z}_{+}=\{0,1,2,\dots \}$, $%
\mathbb{R}_{+}=(0,\infty )$.

\item For real numbers $n,k$, $n\wedge k:=\min (n,k)$.

\item All logarithms are natural logarithms, i.e.\ $\log =\ln $.

\item $(\Omega ,\mathcal{F},\mathbb{P})$ denotes a fixed probability space
on which all random variables are defined; $\mathbb{E}[\cdot ]$ is
expectation with respect to $\mathbb{P}$.

\item $f^{(k)}$ denotes the $k$-th derivative of $f$; $\Gamma
(t)=\int_{0}^{\infty }y^{t-1}e^{-y}dy$ is the Gamma function.

\item For two functions $f,g$ with $g(t)>0$, the notation $f(t)=O(g(t))$ as $%
t\rightarrow \infty $ means $\limsup_{t\rightarrow \infty
}|f(t)|/g(t)<\infty $.

\item The parameter $a\in (0,\infty )$ is fixed unless otherwise stated; $t$
varies in $(0,\infty )$.
\end{itemize}

\subsection{The scaled logistic function}

\label{ssec:logistic}

The pivotal scalar function in the analysis is the scaled logistic function 
\begin{equation}
f_{a}(x)\;:=\;\frac{1}{1+e^{-ax}},\qquad x\in \lbrack 0,\infty ),\ a>0.
\label{eq:fa-def}
\end{equation}%
For clarity, we now recall a first result from \cite{dirichlet} and provide
a short proof.

\begin{lemma}[Elementary properties of $f_{a}$]
\label{lem:logistic} For every $a>0$, the function $f_{a}\colon \lbrack
0,\infty )\rightarrow (0,1)$ is $C^{\infty }$ on $[0,\infty )$ and
satisfies, for all $x\geq 0$, 
\begin{align}
f_{a}^{\prime }(x)& =a\,f_{a}(x)\bigl(1-f_{a}(x)\bigr),  \label{eq:fa-1} \\
f_{a}^{\prime \prime }\left( x\right) & =a^{2}\,f_{a}(x)\bigl(1-f_{a}(x)%
\bigr)\bigl(1-2f_{a}(x)\bigr).  \label{eq:fa-2}
\end{align}%
Moreover, $f_{a}(0)=\tfrac{1}{2}$, $f_{a}^{\prime }(0)=a/4>0$, $%
f_{a}^{\prime \prime }(0)=0$, and for every $x>0$ one has the strict
inequalities 
\begin{equation}
\tfrac{1}{2}<f_{a}(x)<1,\qquad f_{a}^{\prime }(x)>0,\qquad f_{a}^{\prime
\prime }(x)<0.  \label{eq:fa-strict}
\end{equation}%
Finally, $0<f_{a}^{(k)}(x)\leq a^{k}/4$ for $k=1$ and all derivatives $%
f_{a}^{(k)}$ are uniformly bounded on $[0,\infty )$ for every $k\in \mathbb{Z%
}_{+}$.
\end{lemma}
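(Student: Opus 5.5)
The plan is to verify the two derivative identities first and then read off everything else from them. Smoothness is immediate: $f_a=g\circ h$ with $h(x)=1+e^{-ax}$ smooth and $g(y)=1/y$ smooth on $(0,\infty)$. Since $h\ge 1$ on $[0,\infty)$, $f_a$ is $C^\infty$ there, with values in $(0,1]$. Equality with $1$ would force $e^{-ax}=0$, which is impossible, so in fact $f_a\in(0,1)$.

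\textbf{Derivative identities.} Differentiating directly,
\begin{equation*}
f_a'(x)=\frac{a e^{-ax}}{(1+e^{-ax})^2}.
\end{equation*}
Using $1-f_a=e^{-ax}/(1+e^{-ax})$, this is exactly $a f_a(1-f_a)$, which is \eqref{eq:fa-1}. For \eqref{eq:fa-2}, differentiate \eqref{eq:fa-1} by the product rule:
\begin{equation*}
f_a''=a f_a'(1-f_a)-a f_a f_a'=a f_a'(1-2f_a).
\end{equation*}
Substituting \eqref{eq:fa-1} once more gives the claimed formula.

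\textbf{Values at $0$ and strict inequalities.} At $x=0$ we have $f_a(0)=1/2$, hence $f_a'(0)=a\cdot\tfrac12\cdot\tfrac12=a/4$ and $f_a''(0)=0$ because of the factor $1-2f_a(0)=0$. For $x>0$ we have $0<e^{-ax}<1$, so $1<1+e^{-ax}<2$ and therefore $\tfrac12<f_a(x)<1$. Consequently $f_a(1-f_a)>0$, which gives $f_a'>0$, while $1-2f_a<0$ gives $f_a''<0$. This proves \eqref{eq:fa-strict}.

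\textbf{Bounds on derivatives.} For $k=1$, the AM--GM bound $s(1-s)\le 1/4$ for $s\in(0,1)$ yields $0<f_a'\le a/4$, with strict positivity from the expression for $f_a'$. For general $k$, the plan is to show by induction that $f_a^{(k)}=a^k P_k(f_a)$ for a polynomial $P_k$ with integer coefficients. This holds because $\frac{d}{dx}P(f_a)=P'(f_a)\,a f_a(1-f_a)$, so one may take $P_{k+1}(s)=P_k'(s)s(1-s)$. Since $f_a$ takes values in the compact set $[1/2,1]$, each $f_a^{(k)}$ is bounded by $a^k\max_{[1/2,1]}|P_k|$. The $k=0$ case is just $0<f_a<1$.

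The only mildly delicate point is the statement ``$0<f_a^{(k)}\le a^k/4$'', which can only be meant for $k=1$: $f_a''$ is negative for $x>0$. I would therefore prove the sign and bound claim for $k=1$ only, and for general $k$ prove just uniform boundedness via the polynomial representation. No real obstacle is expected.
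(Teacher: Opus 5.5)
Your proposal is correct and follows essentially the same route as the paper: you derive the two derivative identities via $1-f_a=e^{-ax}/(1+e^{-ax})$, read off the values at $0$ and the strict inequalities from $f_a(x)\in(1/2,1)$ for $x>0$, and obtain uniform boundedness from a polynomial representation of $f_a^{(k)}$ in terms of $f_a$ proved by induction. You also make explicit two steps the paper leaves implicit, namely the AM--GM bound giving $f_a'\le a/4$ and the recursion $P_{k+1}(s)=P_k'(s)\,s(1-s)$.
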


\begin{proof}
Differentiating~\eqref{eq:fa-def} and using 
\begin{equation*}
e^{-ax}/(1+e^{-ax})=1-f_{a}(x)
\end{equation*}%
gives~\eqref{eq:fa-1}. Differentiating once more yields 
\begin{equation*}
f_{a}^{\prime \prime }(x)=a\bigl[f_{a}^{\prime
}(x)(1-f_{a}(x))-f_{a}(x)f_{a}^{\prime }(x)\bigr]=af_{a}^{\prime
}(x)(1-2f_{a}(x)),
\end{equation*}
which, combined with~\eqref{eq:fa-1}, gives~\eqref{eq:fa-2}. For $x>0$ and $%
a>0$ we have $e^{-ax}\in (0,1)$, hence $1+e^{-ax}\in (1,2)$ and therefore $%
f_{a}(x)\in (1/2,1)$. The strict inequalities~\eqref{eq:fa-strict} follow at
once from~\eqref{eq:fa-1} and~\eqref{eq:fa-2}, since%
\begin{equation*}
f_{a}(x)(1-f_{a}(x))>0\text{ and }1-2f_{a}(x)<0\text{ for }x>0.
\end{equation*}%
The uniform boundedness of all derivatives is a standard consequence of the
representation $f_{a}^{(k)}(x)=P_{k}(f_{a}(x))$ for a fixed polynomial $P_{k}
$, which is immediate from~\eqref{eq:fa-1} by induction.
\end{proof}

\subsection{The Gamma process and the auxiliary variables}

\label{ssec:gamma}

We continue by assembling results from \cite{dirichlet} that will be needed
subsequently, supplying a brief proof whenever it is relevant.

\begin{definition}[Standard Gamma process and the variables $S_{k}$]
\label{def:gamma-process} We fix the following stochastic objects on $%
(\Omega,\mathcal{F},\mathbb{P})$.

\begin{enumerate}
\item A \emph{standard Gamma process} $(X_{t})_{t\geq 0}$ is a stochastic
process with independent and stationary increments, almost surely
non-decreasing right-continuous trajectories, $X_{0}=0$, and such that for
every $t>0$ the random variable $X_{t}$ has the Gamma$(t,1)$ density 
\begin{equation}
\rho _{t}(\theta )\;=\;\frac{1}{\Gamma (t)}\,\theta ^{\,t-1}e^{-\theta
},\qquad \theta >0.  \label{eq:gamma-density}
\end{equation}%
Its Laplace transform is 
\begin{equation}
\mathbb{E}\!\left[ e^{-\lambda X_{t}}\right] \;=\;(1+\lambda )^{-t},\qquad
\lambda \geq 0,\ t\geq 0.  \label{eq:laplace-xt}
\end{equation}

\item Let $U\sim \mathrm{Uniform}[0,1]$ and $T\sim \mathrm{Exp}(1)$ be
independent. Let $(U_{k})_{k\geq 1}$ and $(T_{k})_{k\geq 1}$ be i.i.d.\
copies of $U$ and $T$, mutually independent and independent of $%
(X_{t})_{t\geq 0}$. We set 
\begin{equation}
S_{0}:=0,\qquad S_{k}:=\sum_{j=1}^{k}U_{j}T_{j},\quad k\in \mathbb{N}.
\label{eq:Sk-def}
\end{equation}
\end{enumerate}
\end{definition}

\begin{lemma}[Laplace transform of $S_{k}$]
\label{lem:laplace-sk} For every $\lambda\ge 0$ and $k\in\mathbb{Z}_{+}$, 
\begin{equation}  \label{eq:laplace-sk}
\mathbb{E}\!\left[e^{-\lambda S_{k}}\right] \;=\;\left(\frac{\log(1+\lambda)%
}{\lambda}\right)^{k},
\end{equation}
with the convention $\log(1+\lambda)/\lambda\to 1$ as $\lambda\to 0$.
\end{lemma}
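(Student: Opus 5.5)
Since $S_k$ is a sum of $k$ i.i.d.\ copies of $UT$, independence gives $\mathbb{E}[e^{-\lambda S_k}]=\bigl(\mathbb{E}[e^{-\lambda UT}]\bigr)^k$. The whole statement therefore reduces to the case $k=1$, together with the trivial case $k=0$, where $S_0=0$ and both sides equal $1$.

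For $k=1$ I would condition on $U$ and use Fubini, which is justified because the integrand is nonnegative and bounded by $1$. For fixed $u\in[0,1]$ the exponential law gives
\begin{equation*}
\mathbb{E}[e^{-\lambda uT}]=\int_0^\infty e^{-(1+\lambda u)s}\,ds=\frac{1}{1+\lambda u}.
\end{equation*}
Integrating over $u\in[0,1]$ then yields, for $\lambda>0$,
\begin{equation*}
\mathbb{E}[e^{-\lambda UT}]=\int_0^1\frac{du}{1+\lambda u}=\frac{\log(1+\lambda)}{\lambda}.
\end{equation*}
At $\lambda=0$ the left-hand side equals $1$, which matches the stated convention, since $\log(1+\lambda)/\lambda\to1$ as $\lambda\to0$.

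Finally I would raise this to the $k$-th power using the independence of the pairs $(U_j,T_j)$, so that the product $U_jT_j$ is independent across $j$. There is no real obstacle in this argument. The only point needing care is the justification of the interchange of integrals, which nonnegativity handles, and the handling of the endpoint $\lambda=0$.
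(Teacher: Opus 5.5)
Your proposal is correct and follows the paper's proof essentially step for step. The paper also conditions on $U_1$ to obtain $(1+\lambda u)^{-1}$, integrates over $[0,1]$ to get $\log(1+\lambda)/\lambda$, and then uses multiplicativity of the Laplace transform over the i.i.d.\ summands $U_jT_j$; your explicit handling of $k=0$, of $\lambda=0$, and of the Fubini interchange via nonnegativity fills in details the paper leaves implicit.
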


\begin{proof}
Conditioning on $U_{1}$, 
\begin{equation*}
\mathbb{E}[e^{-\lambda U_{1}T_{1}}\mid U_{1}=u]=\mathbb{E}[e^{-\lambda
uT_{1}}]=(1+\lambda u)^{-1}
\end{equation*}%
since $T_{1}\sim \mathrm{Exp}(1)$. Integrating with respect to the uniform
law of $U_{1}$ on $[0,1]$ gives 
\begin{equation*}
\mathbb{E}\!\left[ e^{-\lambda U_{1}T_{1}}\right] \;=\;\int_{0}^{1}\frac{du}{%
1+\lambda u}\;=\;\frac{\log (1+\lambda )}{\lambda }.
\end{equation*}%
Since the summands $U_{j}T_{j}$ in~\eqref{eq:Sk-def} are i.i.d., the
identity~\eqref{eq:laplace-sk} follows from the multiplicativity of the
Laplace transform under independent sums.
\end{proof}

\begin{lemma}[Stochastic differentiation formula for Gamma processes]
\label{lem:diff-gamma} Let $\varphi\colon[0,\infty)\to\mathbb{R}$ be of
class $C^{\infty}$ with derivatives of every order uniformly bounded on $%
[0,\infty)$, and let $(X_{t})_{t\ge 0}$ and $(S_{k})_{k\ge 0}$ be the
processes of Definition~\ref{def:gamma-process}, independent. Define $%
\psi(t):=\mathbb{E}[\varphi(X_{t})]$. Then $\psi\in C^{\infty}(0,\infty)$
and, for every $t>0$ and $k\in\mathbb{Z}_{+}$, 
\begin{equation}  \label{eq:gamma-diff-formula}
\psi^{(k)}(t)\;=\;\mathbb{E}\!\left[\varphi^{(k)}(X_{t}+S_{k})\right].
\end{equation}
\end{lemma}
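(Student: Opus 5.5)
By induction on $k$, it suffices to prove the case $k=1$ in a form that can be iterated. Concretely, I would establish the one-step identity
\begin{equation*}
\frac{d}{dt}\,\mathbb{E}\bigl[g(X_{t}+Y)\bigr]=\mathbb{E}\bigl[g^{\prime}(X_{t}+Y+UT)\bigr]
\end{equation*}
for every $C^{\infty}$ function $g$ with bounded derivatives and every nonnegative random variable $Y$ independent of $(X_{t})$ and of the pair $(U,T)$. Applying this with $g=\varphi^{(j)}$ and $Y=S_{j}$, and taking $U=U_{j+1}$, $T=T_{j+1}$, passes from $\psi^{(j)}$ to $\psi^{(j+1)}$, because $S_{j}+U_{j+1}T_{j+1}=S_{j+1}$ and all the required independences hold by Definition~\ref{def:gamma-process}. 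Continuity of each $\psi^{(k)}$ follows from dominated convergence, using the uniform bound on $\varphi^{(k)}$ together with the fact that $X_{t}$ is continuous in law. Hence $\psi\in C^{\infty}(0,\infty)$.

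For the one-step identity I would condition on $Y=y$, which reduces the claim to deterministic functions $h(x)=g(x+y)$. I would first prove it for exponentials $h(x)=e^{-\lambda x}$ with $\lambda\ge 0$. By \eqref{eq:laplace-xt} we have $\mathbb{E}[h(X_{t})]=(1+\lambda)^{-t}$, whose $t$-derivative is $-\log(1+\lambda)(1+\lambda)^{-t}$. On the other side, Lemma~\ref{lem:laplace-sk} with $k=1$ gives
\begin{equation*}
\mathbb{E}\bigl[h^{\prime}(X_{t}+UT)\bigr]=-\lambda(1+\lambda)^{-t}\,\frac{\log(1+\lambda)}{\lambda},
\end{equation*}
and the two expressions coincide. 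This computation is the whole mechanism behind the formula.

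To pass from exponentials to general $h$, I would avoid a density argument and use a direct route based on the density \eqref{eq:gamma-density}. Since $\partial_{t}\rho_{t}(\theta)=\rho_{t}(\theta)\bigl(\log\theta-\Gamma'(t)/\Gamma(t)\bigr)$, differentiation under the integral sign is justified by boundedness of $h$ together with the integrability of $|\log\theta|\,\theta^{t-1}e^{-\theta}$, locally uniformly in $t$. This shows that $t\mapsto\mathbb{E}[h(X_{t})]$ is differentiable. By the stationary independent increments of the Gamma process, the derivative can be computed as
\begin{equation*}
\lim_{s\downarrow 0}\frac{1}{s}\,\mathbb{E}\bigl[h(X_{t}+X^{\prime}_{s})-h(X_{t})\bigr],
\end{equation*}
where $X^{\prime}_{s}$ is an independent copy of $X_{s}$. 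Writing
\begin{equation*}
h(X_{t}+X^{\prime}_{s})-h(X_{t})=\int_{0}^{X^{\prime}_{s}}h^{\prime}(X_{t}+r)\,dr,
\end{equation*}
and using the Lévy measure $e^{-r}r^{-1}\,dr$ of the Gamma process, this limit equals
\begin{equation*}
\int_{0}^{\infty}\mathbb{E}\bigl[h(X_{t}+r)-h(X_{t})\bigr]\,\frac{e^{-r}}{r}\,dr=\int_{0}^{\infty}\!\!\int_{0}^{1}\mathbb{E}\bigl[h^{\prime}(X_{t}+ur)\bigr]\,e^{-r}\,du\,dr,
\end{equation*}
which is exactly $\mathbb{E}[h^{\prime}(X_{t}+UT)]$.

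The main obstacle is justifying the small-time limit. One has to show that
\begin{equation*}
\frac{1}{s}\,\mathbb{E}\bigl[F(X_{s})\bigr]\to\int_{0}^{\infty}F(r)\,\frac{e^{-r}}{r}\,dr \quad\text{as } s\downarrow 0,
\end{equation*}
for $F(r)=\mathbb{E}[h(X_{t}+r)-h(X_{t})]$. This follows by writing
\begin{equation*}
\frac{1}{s}\,\mathbb{E}\bigl[F(X_{s})\bigr]=\int_{0}^{\infty}F(r)\,\frac{r^{s-1}e^{-r}}{s\,\Gamma(s)}\,dr,
\end{equation*}
and noting that $s\Gamma(s)=\Gamma(s+1)\to 1$ while $r^{s}\to 1$ pointwise. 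Since $|F(r)|\le\|h^{\prime}\|_{\infty}\,r$, dominated convergence applies with dominating function $\|h^{\prime}\|_{\infty}\,e^{-r}\max(1,r)$.

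Alternatively, one can bypass the Lévy-measure step: the $\theta$-integral identity obtained from $\partial_{t}\rho_{t}$ can be checked on exponentials, which was done above, and then extended by a monotone class or Stone--Weierstrass argument on $[0,\infty]$. This works because both sides are continuous linear functionals of $h^{\prime}$ once they are expressed via $h(x)=h(0)+\int_{0}^{x}h^{\prime}$.
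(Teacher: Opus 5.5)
Your proof is correct, and its core is a genuinely different route from the paper's.

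The paper's proof is only a sketch. It obtains the one-step identity $\frac{d}{dt}\mathbb{E}[\varphi(X_t)]=\mathbb{E}[\varphi'(X_t+U_1T_1)]$ from the $\theta$-derivative $\partial_\theta\rho_t(\theta)=\rho_t(\theta)\bigl(\frac{t-1}{\theta}-1\bigr)$, which amounts to an integration by parts in the state variable. It combines this with the Laplace transform of $S_1$ from Lemma~\ref{lem:laplace-sk}. In effect it matches both sides on exponentials, which is what your closing ``alternatively'' paragraph describes. It also leaves implicit the uniqueness or density step needed to pass from exponentials to general $\varphi$.

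Your main route works differently. It uses the convolution property $X_{t+s}\overset{d}{=}X_t+X_s'$ and the small-time limit $s^{-1}\rho_s(r)\,dr\to r^{-1}e^{-r}\,dr$, which is the L\'evy measure of the Gamma subordinator. The identity $h(x+r)-h(x)=r\int_0^1 h'(x+ur)\,du$ then converts $r^{-1}e^{-r}\,dr$ into the law of $UT$.

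What your route buys:
\begin{itemize}
\item A fully explicit argument. Each interchange of limit and integral comes with a concrete dominating function, and no Laplace-uniqueness theorem is needed.
\item An explanation of why $UT$ appears at all. For any driftless subordinator with L\'evy measure $\nu$ and $\int r\,\nu(dr)<\infty$, the measure $r\,\nu(dr)$ plays the role of the law of $T$ (up to normalisation). For the Gamma process it happens to be the probability measure $e^{-r}\,dr$.
\end{itemize}
The paper's route is shorter on the page and ties the formula directly to Lemma~\ref{lem:laplace-sk}, but it needs that extra uniqueness step to be complete.

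Two small remarks. First, in your main route the exponential computation serves only as a consistency check. Second, the density-based differentiability step could be dropped. Your limit already yields a right derivative $\mathbb{E}[h'(X_t+UT)]$ that is continuous in $t$, and a continuous function with a continuous right derivative on an open interval is differentiable.
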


\begin{proof}
The argument is based on the integration-by-parts identity 
\begin{equation*}
\frac{d}{dt}\mathbb{E}[\varphi (X_{t})]\;=\;\mathbb{E}\!\left[ \varphi
^{\prime }(X_{t}+U_{1}T_{1})\right] ,
\end{equation*}%
which is obtained from $\rho _{t}^{\prime }(\theta )=\rho _{t}(\theta )\bigl(%
\frac{t-1}{\theta }-1\bigr)$ and the representation~\eqref{eq:laplace-sk}
for $k=1$. The general case follows by induction on~$k$ and Fubini's
theorem, which is applicable because all derivatives of $\varphi $ are
uniformly bounded.
\end{proof}

\begin{remark}
\label{rem:diff-gamma-applicability} By Lemma~\ref{lem:logistic}, the scaled
logistic function $f_{a}$ has all derivatives uniformly bounded on $%
[0,\infty)$. Hence Lemma~\ref{lem:diff-gamma} is applicable with $%
\varphi=f_{a}$ for every $a>0$, and we may freely differentiate $t\mapsto%
\mathbb{E}[f_{a}(X_{t})]$ under the expectation sign any number of times.
This observation will be used repeatedly without further mention.
\end{remark}


\section{The Gamma--Mellin Probabilistic Representation}

\label{sec:prob-rep}

The cornerstone of our approach is the following Mellin--Laplace identity,
which encodes the analytic-number-theoretic object $\eta_{a}(t)$ as a single
expectation over the Gamma process.

\begin{theorem}[Gamma--Mellin probabilistic representation]
\label{thm:prob-rep} Let $a\in(0,\infty)$, and let $\eta_{a}(t)$ be defined
for $t\in(0,\infty)$ by 
\begin{equation}  \label{eq:eta-def}
\eta_{a}(t)\;:=\;\sum_{m=0}^{\infty}\frac{(-1)^{m}}{(am+1)^{t}}.
\end{equation}
Let $f_{a}$ be the scaled logistic function~\eqref{eq:fa-def} and $%
(X_{t})_{t\ge 0}$ the standard Gamma process of Definition~\ref%
{def:gamma-process}. Then, for every $t>0$, 
\begin{equation}  \label{eq:eta-prob-rep}
\eta_{a}(t)\;=\;\mathbb{E}\!\left[f_{a}(X_{t})\right].
\end{equation}
In particular, $\eta_{a}(t)\in(\tfrac12,1)$ for all $t>0$.
\end{theorem}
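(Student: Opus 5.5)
Expand the logistic function in a geometric series and integrate term by term against the Gamma law. For $x>0$ we have $0<e^{-ax}<1$, so
\begin{equation*}
f_{a}(x)=\frac{1}{1+e^{-ax}}=\sum_{m=0}^{\infty}(-1)^{m}e^{-amx},\qquad x>0.
\end{equation*}
Since $X_{t}$ has the density~\eqref{eq:gamma-density}, $X_{t}>0$ almost surely for $t>0$, and the expansion holds pointwise on $\Omega$ a.s. If we may take expectations term by term, \eqref{eq:laplace-xt} with $\lambda=am$ gives $\mathbb{E}[e^{-amX_{t}}]=(1+am)^{-t}$, and summing yields~\eqref{eq:eta-prob-rep}.

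The main obstacle is justifying the interchange of sum and expectation. The series is alternating and not absolutely convergent in expectation when $t\le 1$, since $\sum_m(1+am)^{-t}=\infty$. So I will not use Fubini–Tonelli on absolute values. Instead I will use dominated convergence on the partial sums. For $x>0$ and $N\ge 0$,
\begin{equation*}
F_{N}(x):=\sum_{m=0}^{N}(-1)^{m}e^{-amx}=\frac{1-(-e^{-ax})^{N+1}}{1+e^{-ax}},
\end{equation*}
so $|F_{N}(x)|\le 2/(1+e^{-ax})\le 2$ uniformly in $N$ and $x>0$, and $F_{N}(x)\to f_{a}(x)$. By dominated convergence, $\mathbb{E}[F_{N}(X_{t})]\to\mathbb{E}[f_{a}(X_{t})]$. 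On the other hand, $\mathbb{E}[F_{N}(X_{t})]=\sum_{m=0}^{N}(-1)^{m}(am+1)^{-t}$ by linearity and~\eqref{eq:laplace-xt}. These partial sums converge to $\eta_{a}(t)$ by the Leibniz test, because $(am+1)^{-t}$ decreases to $0$ for $t>0$. Uniqueness of limits gives the identity.

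For the range statement, Lemma~\ref{lem:logistic} gives $\tfrac12<f_{a}(x)<1$ for every $x>0$. Since $X_{t}>0$ a.s., the random variable $f_{a}(X_{t})$ takes values in $(\tfrac12,1)$ a.s. Hence $\mathbb{E}[f_{a}(X_{t})]\in[\tfrac12,1]$. Strictness follows because $\mathbb{E}[f_{a}(X_{t})-\tfrac12]$ is the expectation of an a.s. strictly positive variable, hence is $>0$; similarly $\mathbb{E}[1-f_{a}(X_{t})]>0$. Therefore $\eta_{a}(t)\in(\tfrac12,1)$.
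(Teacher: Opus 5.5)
Your proof is correct and follows the paper's argument essentially step for step: the closed form of the truncated geometric sum, the uniform bound $2$ as dominating function, dominated convergence as $N\to\infty$, and the almost-sure bounds $\tfrac12<f_a(X_t)<1$ for the range statement. Your extra remarks (that absolute Fubini fails for $t\le 1$, and the explicit strictness argument) are sound, and the appeal to the Leibniz test is harmless but redundant, since dominated convergence already shows that the partial sums converge.
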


\begin{proof}
Fix $t\in (0,\infty )$ and $a>0$. Since $X_{t}>0$ almost surely (the law $%
\rho _{t}$ in~\eqref{eq:gamma-density} has support $(0,\infty )$), the
random variable $e^{-aX_{t}}$ belongs to $(0,1)$ almost surely. For every $%
M\in \mathbb{N}$, the geometric sum formula gives, pointwise on $\Omega $, 
\begin{equation}
\sum_{m=0}^{M}\bigl(-e^{-aX_{t}}\bigr)^{m}\;=\;\frac{1-(-e^{-aX_{t}})^{M+1}}{%
1+e^{-aX_{t}}}.  \label{eq:geom-truncated}
\end{equation}%
By~\eqref{eq:laplace-xt} (with $\lambda =am$), 
\begin{equation*}
\mathbb{E}[e^{-amX_{t}}]=(1+am)^{-t}=(am+1)^{-t},
\end{equation*}%
hence by Fubini's theorem (which is applicable since the partial sums are
uniformly bounded by $2$, see below), 
\begin{align}
\sum_{m=0}^{M}\frac{(-1)^{m}}{(am+1)^{t}}& =\sum_{m=0}^{M}(-1)^{m}\mathbb{E}%
\!\left[ e^{-amX_{t}}\right] =\mathbb{E}\!\left[
\sum_{m=0}^{M}(-e^{-aX_{t}})^{m}\right]   \notag \\
& =\mathbb{E}\!\left[ \frac{1-(-e^{-aX_{t}})^{M+1}}{1+e^{-aX_{t}}}\right] .
\label{eq:partial-sum-rep}
\end{align}%
For each $\omega \in \Omega $ with $X_{t}(\omega )>0$, the hypothesis $%
e^{-aX_{t}(\omega )}\in (0,1)$ yields 
\begin{equation}
\left\vert \frac{1-(-e^{-aX_{t}})^{M+1}}{1+e^{-aX_{t}}}\right\vert \;\leq \;%
\frac{2}{1+e^{-aX_{t}}}\;\leq \;2,  \label{eq:dominant}
\end{equation}%
which provides an $L^{1}$-dominating function (the constant $2$). Taking $%
M\rightarrow \infty $ inside the right-hand side of~%
\eqref{eq:partial-sum-rep}, the integrand converges $\mathbb{P}$-almost
surely to $f_{a}(X_{t})=(1+e^{-aX_{t}})^{-1}$ by~\eqref{eq:fa-def}, since $%
(-e^{-aX_{t}})^{M+1}\rightarrow 0$ a.s. By Lebesgue's dominated convergence
theorem (applied with the bound~\eqref{eq:dominant}), we may pass to the
limit under the expectation: 
\begin{equation*}
\eta _{a}(t)\;=\;\lim_{M\rightarrow \infty }\sum_{m=0}^{M}\frac{(-1)^{m}}{%
(am+1)^{t}}\;=\;\mathbb{E}\!\left[ f_{a}(X_{t})\right] .
\end{equation*}%
This proves~\eqref{eq:eta-prob-rep}. The two-sided bound $\tfrac{1}{2}<\eta
_{a}(t)<1$ for $t>0$ now follows by taking expectations in~%
\eqref{eq:fa-strict}: indeed, $X_{t}>0$ a.s.\ implies $\tfrac{1}{2}%
<f_{a}(X_{t})<1$ a.s., and hence $\tfrac{1}{2}<\mathbb{E}[f_{a}(X_{t})]<1$.
\end{proof}

\begin{remark}
\label{rem:special-cases} For $a=1$ and $a=2$, identity~%
\eqref{eq:eta-prob-rep} specialises to $\eta(t)=\mathbb{E}[f_{1}(X_{t})]$
and $\beta(t)=\mathbb{E}[f_{2}(X_{t})]$ respectively, giving probabilistic
representations of the classical Dirichlet eta and beta functions. The case $%
a=1$ is mentioned in \cite{dirichlet} in the language of binomial
transforms, while $a=2$ relates the Catalan constant to a logistic
expectation, since $G=\beta(2)=\mathbb{E}[f_{2}(X_{2})]$.
\end{remark}


\section{Riccati Structure and Strict Concavity}

\label{sec:riccati-concavity}

In this section we exploit Theorem~\ref{thm:prob-rep} and Lemma~\ref%
{lem:diff-gamma} to derive a Riccati equation for the logarithmic derivative
of $\eta_{a}$ and to deduce strict concavity and strict log-concavity from a
single differential inequality.

\subsection{Smoothness and the Riccati identity}

Define the \emph{Riccati field} associated with $\eta_{a}$ by 
\begin{equation}  \label{eq:riccati-field}
\varphi_{a}(t)\;:=\;\frac{\eta_{a}^{\prime }(t)}{\eta_{a}(t)},\qquad t>0.
\end{equation}
Note that $\eta_{a}(t)>0$ by Theorem~\ref{thm:prob-rep}, so $\varphi_{a}$ is
well defined.

\begin{theorem}[Riccati differential identity]
\label{thm:riccati-identity} Under the hypotheses of Theorem~\ref%
{thm:prob-rep}, $\eta_{a}\in C^{\infty}(0,\infty)$ and the Riccati field $%
\varphi_{a}$ satisfies 
\begin{equation}  \label{eq:riccati-eq}
\varphi_{a}^{\prime }(t)\;+\;\varphi_{a}^{2}(t)\;=\;q_{a}(t),\qquad t>0,
\end{equation}
where the Riccati forcing is 
\begin{equation}  \label{eq:forcing-def}
q_{a}(t)\;:=\;\frac{\eta_{a}^{\prime \prime }(t)}{\eta_{a}(t)},\qquad t>0.
\end{equation}
\end{theorem}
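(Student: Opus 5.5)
The proof has two parts: establish smoothness of $\eta_{a}$, then obtain \eqref{eq:riccati-eq} by direct differentiation of the quotient defining $\varphi_{a}$. The Riccati structure itself is elementary; all the analytic content is in the smoothness and the positivity of the denominator.

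For smoothness I invoke Theorem~\ref{thm:prob-rep}, which gives $\eta_{a}(t)=\mathbb{E}[f_{a}(X_{t})]$ for every $t>0$. By Lemma~\ref{lem:logistic}, $f_{a}$ is $C^{\infty}$ on $[0,\infty)$ with all derivatives uniformly bounded. So Lemma~\ref{lem:diff-gamma} applies with $\varphi=f_{a}$, as recorded in Remark~\ref{rem:diff-gamma-applicability}. This yields $\eta_{a}\in C^{\infty}(0,\infty)$ together with the explicit formulas
\begin{equation*}
\eta_{a}^{(k)}(t)=\mathbb{E}\!\left[f_{a}^{(k)}(X_{t}+S_{k})\right],\qquad k\in\mathbb{Z}_{+}.
\end{equation*}
In particular $\eta_{a}'$ and $\eta_{a}''$ exist and are continuous on $(0,\infty)$.

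Next, Theorem~\ref{thm:prob-rep} also gives $\eta_{a}(t)>\tfrac12>0$. Hence $\varphi_{a}=\eta_{a}'/\eta_{a}$ is a quotient of $C^{\infty}$ functions with a nonvanishing denominator, so it is itself $C^{\infty}$. The quotient rule then gives
\begin{equation*}
\varphi_{a}'(t)=\frac{\eta_{a}''(t)\eta_{a}(t)-\bigl(\eta_{a}'(t)\bigr)^{2}}{\eta_{a}(t)^{2}}=\frac{\eta_{a}''(t)}{\eta_{a}(t)}-\varphi_{a}^{2}(t).
\end{equation*}
Rearranging gives $\varphi_{a}'+\varphi_{a}^{2}=q_{a}$, with $q_{a}$ as in \eqref{eq:forcing-def}. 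The forcing $q_{a}$ is well defined for the same reason, namely $\eta_{a}>0$.

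The only point needing care is the justification of differentiation under the expectation, i.e.\ the hypotheses of Lemma~\ref{lem:diff-gamma}. These are fully covered by the uniform boundedness statement in Lemma~\ref{lem:logistic}, so I expect no genuine obstacle. An alternative route is to differentiate the Dirichlet series termwise on compact subsets of $(0,\infty)$. That approach requires uniform convergence of the derivative series, which is not immediate for an alternating series with $t$ near $0$, so I prefer the probabilistic route.
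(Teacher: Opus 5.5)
Your proposal is correct and follows the paper's proof: smoothness and the formulas $\eta_{a}^{(k)}(t)=\mathbb{E}[f_{a}^{(k)}(X_{t}+S_{k})]$ come from Lemma~\ref{lem:diff-gamma} applied to $f_{a}$, and the identity then follows from a one-line differentiation and division by $\eta_{a}>0$. The only difference is cosmetic: you apply the quotient rule to $\eta_{a}'/\eta_{a}$, whereas the paper differentiates $\eta_{a}'=\eta_{a}\varphi_{a}$ with the product rule. Your remark that $\varphi_{a}$ is $C^{\infty}$ also makes explicit the differentiability of $\varphi_{a}$ that the paper uses without comment.
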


\begin{proof}
By Theorem~\ref{thm:prob-rep}, $\eta_{a}(t)=\mathbb{E}[f_{a}(X_{t})]$. By
Lemma~\ref{lem:logistic}, $f_{a}\in C^{\infty}([0,\infty))$ with uniformly
bounded derivatives of every order, so the hypotheses of Lemma~\ref%
{lem:diff-gamma} are satisfied. Consequently $\eta_{a}\in
C^{\infty}(0,\infty)$ and 
\begin{equation}  \label{eq:eta-deriv-rep}
\eta_{a}^{(k)}(t)\;=\;\mathbb{E}\!\left[f_{a}^{(k)}(X_{t}+S_{k})\right],
\qquad t>0,\ k\in\mathbb{Z}_{+}.
\end{equation}
The definition~\eqref{eq:riccati-field} rewrites as $\eta_{a}^{\prime
}(t)=\eta_{a}(t)\,\varphi_{a}(t)$. Differentiating both sides using the
Leibniz rule for ordinary differentiation, we obtain 
\begin{equation*}
\eta_{a}^{\prime \prime }(t)  \;=\;\eta_{a}^{\prime
}(t)\varphi_{a}(t)+\eta_{a}(t)\varphi_{a}^{\prime }(t) 
\;=\;\eta_{a}(t)\varphi_{a}^{2}(t)+\eta_{a}(t)\varphi_{a}^{\prime }(t), 
\end{equation*}
where we substituted $\eta_{a}^{\prime }(t)=\eta_{a}(t)\varphi_{a}(t)$ in
the second equality. Dividing both sides by $\eta_{a}(t)>0$ (which is
guaranteed by Theorem~\ref{thm:prob-rep}) yields exactly $%
\varphi_{a}^{\prime }(t)+\varphi_{a}^{2}(t)=\eta_{a}^{\prime \prime
}(t)/\eta_{a}(t)=q_{a}(t)$, i.e.~\eqref{eq:riccati-eq}.
\end{proof}

\subsection{The concavity theorem}

\begin{theorem}[Strict concavity, log-concavity and boundary limits]
\label{thm:main-concavity} Let $a\in(0,\infty)$ and $\eta_{a}$ as in~%
\eqref{eq:eta-def}. Then:

\begin{enumerate}
\item \label{it:main-1} The boundary limits are 
\begin{equation}
\lim_{t\rightarrow 0^{+}}\eta _{a}(t)\;=\;\tfrac{1}{2},\qquad
\lim_{t\rightarrow \infty }\eta _{a}(t)\;=\;1.  \label{eq:limits}
\end{equation}

\item \label{it:main-2} The forcing $q_{a}$ satisfies $q_{a}(t)<0$ for every 
$t>0$, i.e.\ $\eta _{a}^{\prime \prime }(t)<0$ for every $t>0$. Hence $\eta
_{a}$ is \emph{strictly concave} on $(0,\infty )$.

\item \label{it:main-3} The Riccati field obeys the strict differential
inequality 
\begin{equation}
\varphi _{a}^{\prime }(t)\;<\;-\varphi _{a}^{2}(t)\;\leq \;0,\qquad t>0,
\label{eq:riccati-ineq}
\end{equation}%
and in particular $\varphi _{a}^{\prime }(t)<0$, so $\eta _{a}$ is \emph{%
strictly log-concave} on $(0,\infty )$.
\end{enumerate}
\end{theorem}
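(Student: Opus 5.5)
The plan is to read every item of Theorem~\ref{thm:main-concavity} off the representation $\eta_{a}(t)=\mathbb{E}[f_{a}(X_{t})]$ of Theorem~\ref{thm:prob-rep} and the derivative formula~\eqref{eq:eta-deriv-rep}, combined with the sign information of Lemma~\ref{lem:logistic}. No estimate on the Dirichlet series itself should be needed.

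For item~\ref{it:main-1} I will use dominated convergence, with $0<f_{a}<1$ as the dominating bound. As $t\to0^{+}$, the Laplace transform~\eqref{eq:laplace-xt} gives $\mathbb{E}[e^{-\lambda X_{t}}]\to1$ for every $\lambda>0$. Since $X_{t}\ge0$, this means $X_{t}\to0$ in probability: by Markov's inequality $\mathbb{P}(X_{t}>\varepsilon)\le \mathbb{E}[1-e^{-X_{t}}]/(1-e^{-\varepsilon})\to0$. Because $f_{a}$ is bounded and continuous with $f_{a}(0)=\tfrac12$, it follows that $\mathbb{E}[f_{a}(X_{t})]\to\tfrac12$. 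For $t\to\infty$ I will argue symmetrically: $\mathbb{E}[e^{-aX_{t}}]=(1+a)^{-t}\to0$. The identity $1-f_{a}(x)=e^{-ax}/(1+e^{-ax})\le e^{-ax}$ then gives $0<1-\eta_{a}(t)\le(1+a)^{-t}\to0$. Alternatively, both limits can be checked directly from the alternating series.

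For item~\ref{it:main-2} I take $k=2$ in~\eqref{eq:eta-deriv-rep}, which gives $\eta_{a}''(t)=\mathbb{E}[f_{a}''(X_{t}+S_{2})]$. Here $X_{t}+S_{2}\ge X_{t}>0$ almost surely, because $X_{t}$ has a density on $(0,\infty)$ and $S_{2}\ge0$. By~\eqref{eq:fa-strict}, the integrand is strictly negative almost surely, so its expectation is strictly negative. Dividing by $\eta_{a}(t)>0$ gives $q_{a}(t)<0$, and strict concavity follows.

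For item~\ref{it:main-3} I substitute $q_{a}<0$ into the Riccati identity~\eqref{eq:riccati-eq} of Theorem~\ref{thm:riccati-identity}. This gives $\varphi_{a}'=q_{a}-\varphi_{a}^{2}<-\varphi_{a}^{2}\le0$. Since $(\log\eta_{a})''=\varphi_{a}'<0$, $\eta_{a}$ is strictly log-concave. I will also note the same argument with $k=1$: $\eta_{a}'(t)=\mathbb{E}[f_{a}'(X_{t}+S_{1})]>0$, so $\varphi_{a}>0$, although this is not strictly required here.

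The only genuinely delicate point is the strictness in item~\ref{it:main-2}. One must check that the shifted variable $X_{t}+S_{2}$ avoids the point $0$, where $f_{a}''$ vanishes, with probability one. This follows because $X_{t}$ has an absolutely continuous law on $(0,\infty)$. The validity of the derivative formula itself is supplied by Remark~\ref{rem:diff-gamma-applicability}.
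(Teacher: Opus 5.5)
Your proof is correct. Items~2 and~3 follow the paper's argument: $\eta_{a}''(t)=\mathbb{E}[f_{a}''(X_{t}+S_{2})]$ with $X_{t}+S_{2}>0$ almost surely gives strict negativity, and then $\varphi_{a}'=q_{a}-\varphi_{a}^{2}<-\varphi_{a}^{2}\le 0$.

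The genuine difference is in item~1. The paper argues pathwise and then applies dominated convergence:
\begin{itemize}
\item as $t\to0^{+}$, $X_{t}\to0$ almost surely by right-continuity of the Gamma process at $0$;
\item as $t\to\infty$, $X_{t}\to\infty$ almost surely by the strong law of large numbers for L\'evy processes.
\end{itemize}
You use only the one-dimensional marginals, through $\mathbb{E}[e^{-\lambda X_{t}}]=(1+\lambda)^{-t}$. At $0^{+}$ you get convergence in probability via Markov's inequality. At infinity you get the explicit two-sided estimate $0<1-\eta_{a}(t)\le(1+a)^{-t}$ from $1-f_{a}(x)\le e^{-ax}$.

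Your route is more elementary: it needs no path regularity and no strong law, and convergence in probability is all dominated convergence requires. It is also quantitative, already giving the correct leading order $(a+1)^{-t}$ predicted by Lemma~\ref{lem:asymp-expansion}. The paper's route is shorter to state once those process-level facts are granted.

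You can make the $t\to0^{+}$ end equally quantitative. Since $f_{a}(x)-\tfrac{1}{2}=\frac{1-e^{-ax}}{2(1+e^{-ax})}\le\tfrac{1}{2}(1-e^{-ax})$, you get $0<\eta_{a}(t)-\tfrac{1}{2}\le\tfrac{1}{2}\bigl(1-(1+a)^{-t}\bigr)$.

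Your aside that both limits can be read "directly" from the alternating series is less immediate at $t\to0^{+}$. There the series diverges at $t=0$, and the value $\tfrac{1}{2}$ is an Abel-type limit. You do not rely on that aside, so it does not affect the proof.
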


\begin{proof}
\textbf{Step 1: Boundary limits.} Since $(X_{t})$ is right-continuous at $t=0
$ with $X_{0}=0$ a.s., we have $X_{t}\to 0$ a.s.\ as $t\to 0^{+}$. By Lemma~%
\ref{lem:logistic}, $f_{a}$ is continuous and bounded by $1$, so the
dominated convergence theorem applied to~\eqref{eq:eta-prob-rep} yields 
\begin{equation*}
\lim_{t\to 0^{+}}\eta_{a}(t)  \;=\;\mathbb{E}\!\left[\lim_{t\to
0^{+}}f_{a}(X_{t})\right]  \;=\;f_{a}(0)\;=\;\tfrac12. 
\end{equation*}
Furthermore, $X_{t}\to\infty$ a.s.\ as $t\to\infty$, because $\mathbb{E}%
[X_{t}]=t$ and $\Var(X_{t})=t$, hence $X_{t}/t\to 1$ in $L^{2}$ and a.s.\ by
the strong law of large numbers for L\'evy processes (cf.~\cite[Ch.~7]%
{Cinlar2011}). Since $f_{a}(x)\to 1$ as $x\to\infty$ and $0<f_{a}\le 1$,
dominated convergence gives $\lim_{t\to\infty}\eta_{a}(t)=1$.

\textbf{Step 2: Negativity of the forcing.} Apply~\eqref{eq:eta-deriv-rep}
with $k=2$ to obtain 
\begin{equation}  \label{eq:eta2-rep}
\eta_{a}^{\prime \prime }(t)\;=\;\mathbb{E}\!\left[f_{a}^{\prime \prime
}(X_{t}+S_{2})\right],\qquad t>0.
\end{equation}
By Definition~\ref{def:gamma-process}, $X_{t}>0$ a.s.\ for $t>0$, and $%
S_{2}=U_{1}T_{1}+U_{2}T_{2}\ge 0$ a.s. Therefore $X_{t}+S_{2}>0$ a.s., and
Lemma~\ref{lem:logistic} (specifically the strict inequality $f_{a}^{\prime
\prime }(x)<0$ for $x>0$) yields $f_{a}^{\prime \prime }(X_{t}+S_{2})<0$
a.s. Since this random variable is uniformly bounded by $|f_{a}^{\prime
\prime }|_{\infty}<\infty$ (Lemma~\ref{lem:logistic}), taking expectations
preserves the strict inequality (the expectation of an almost surely
strictly negative integrable random variable is strictly negative), giving $%
\eta_{a}^{\prime \prime }(t)<0$ for all $t>0$. Dividing by $\eta_{a}(t)>0$
yields $q_{a}(t)<0$ for all $t>0$. Hence $\eta_{a}$ is strictly concave on $%
(0,\infty)$.

\textbf{Step 3: Strict log-concavity.} By Theorem~\ref{thm:riccati-identity}%
, $\varphi_{a}^{\prime }(t)=q_{a}(t)-\varphi_{a}^{2}(t)$. By Step~2, $%
q_{a}(t)<0$, so $\varphi_{a}^{\prime }(t)<-\varphi_{a}^{2}(t)\le 0$ for all $%
t>0$, which establishes~\eqref{eq:riccati-ineq} and the strict log-concavity
of $\eta_{a}$.
\end{proof}

\begin{remark}
\label{rem:single-mechanism} Theorem~\ref{thm:main-concavity} provides, in a
single dynamical inequality, both the strict concavity ($\eta_{a}^{\prime
\prime }<0$, hypothesis ingredient: $f_{a}^{\prime \prime }<0$ on $%
(0,\infty) $, used via~\eqref{eq:eta2-rep}) and the strict log-concavity ($%
\varphi_{a}^{\prime }<0$, hypothesis ingredient: the Riccati identity~%
\eqref{eq:riccati-eq} of Theorem~\ref{thm:riccati-identity}). This
unification is the main qualitative advantage of the Riccati--Gamma point of
view over the direct estimates of~\cite{dirichlet}.
\end{remark}


\section{Dynamics of the Riccati Field and Asymptotic Manifold}

\label{sec:dynamics}

We now study the phase portrait of the Riccati field $\varphi_{a}$, identify
the stable equilibrium manifold of the unforced flow, and establish a global
trapping inequality with its equivalent curvature counterpart.

\subsection{Positivity and monotonicity}

\begin{proposition}[Positivity, monotonicity and limits of $\protect\varphi%
_{a}$]
\label{prop:phi-properties} Let $a\in(0,\infty)$ and $\varphi_{a}$ the
Riccati field~\eqref{eq:riccati-field}. Then:

\begin{enumerate}
\item $\varphi _{a}(t)>0$ for every $t>0$;

\item $\varphi _{a}$ is strictly decreasing on $(0,\infty )$;

\item $\lim_{t\rightarrow 0^{+}}\varphi _{a}(t)\in (0,\infty ]$ and $%
\lim_{t\rightarrow \infty }\varphi _{a}(t)=0$.
\end{enumerate}
\end{proposition}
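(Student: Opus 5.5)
The plan is to prove the three items in order, each from the representation \eqref{eq:eta-deriv-rep} together with Theorem~\ref{thm:main-concavity}.

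\textbf{Positivity.} Apply \eqref{eq:eta-deriv-rep} with $k=1$ to get $\eta_a'(t)=\mathbb{E}[f_a'(X_t+S_1)]$. By Lemma~\ref{lem:logistic}, $f_a'>0$ on $[0,\infty)$, and $f_a'$ is bounded by $a/4$. Hence the expectation is strictly positive, i.e.\ $\eta_a'(t)>0$. This is the same argument as Step~2 of the proof of Theorem~\ref{thm:main-concavity}. Dividing by $\eta_a(t)>0$ (Theorem~\ref{thm:prob-rep}) gives $\varphi_a(t)>0$.

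\textbf{Monotonicity.} This is immediate from \eqref{eq:riccati-ineq}: $\varphi_a'(t)<-\varphi_a^2(t)\le 0$ for all $t>0$, so $\varphi_a$ is strictly decreasing.

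\textbf{Limits.} A positive, strictly decreasing function has limits at both ends.
\begin{itemize}
\item At $0^+$: the limit is the supremum of $\varphi_a$ on $(0,\infty)$. It lies in $(0,\infty]$ because it exceeds $\varphi_a(1)>0$.
\item At $\infty$: the limit $L=\lim_{t\to\infty}\varphi_a(t)$ exists and $L\ge 0$. Suppose $L>0$. Then $\varphi_a(t)\ge L$ for all $t$, and integrating gives
\[
\log\eta_a(t)-\log\eta_a(1)=\int_1^t\varphi_a(s)\,ds\ \ge\ L(t-1)\to\infty .
\]
This contradicts $\eta_a(t)<1$ from Theorem~\ref{thm:prob-rep}. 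Hence $L=0$.
\end{itemize}
An alternative route uses the Riccati inequality directly: $(1/\varphi_a)'=-\varphi_a'/\varphi_a^2>1$, so $1/\varphi_a(t)>t-1+1/\varphi_a(1)$. This gives the explicit bound $\varphi_a(t)<1/(t-1)$ for $t>1$ and again shows $\varphi_a(t)\to 0$.

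\textbf{Main obstacle.} The only delicate point is making the first item strictly positive rather than merely nonnegative. This rests on $X_t+S_1>0$ almost surely together with the pointwise inequality $f_a'>0$, exactly as in Step~2 of the proof of Theorem~\ref{thm:main-concavity}. I do not expect it to cause difficulty.
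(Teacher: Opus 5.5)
Your proof is correct, and for items (1) and (2) it is exactly the paper's argument. The genuine difference is the limit at infinity in item (3). The paper obtains $\varphi_a(t)\to 0$ by a forward appeal to the expansion $\varphi_a(t)=\log(a+1)(a+1)^{-t}+O((2a+1)^{-t})$ of Theorem~\ref{thm:riccati-dynamics}. That expansion ultimately rests on the term-by-term series asymptotics of Lemma~\ref{lem:asymp-expansion}. Moreover, the proof of item (4) of that theorem itself integrates from $t$ to $\infty$ using the very limit being established, so the loop is only closed by its item (3).

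You instead use only information already established. One route is the boundedness of $\log\eta_a$, since $\eta_a\in(\tfrac12,1)$. The other, more in the spirit of the paper's ``single inequality'' philosophy, is the Riccati inequality~\eqref{eq:riccati-ineq} itself via $(1/\varphi_a)'>1$. This makes your argument self-contained and independent of the Dirichlet-series asymptotics. Pushing your second route slightly further (integrate from $s$ to $t$ and let $s\to 0^{+}$) even gives the universal bound $0<\varphi_a(t)<1/t$ for all $t>0$ and all $a>0$. What your route does not give is the sharp exponential rate $\log(a+1)(a+1)^{-t}$, which is the extra content of the paper's approach. You also explicitly justify that the limit at $0^{+}$ is positive, which the paper leaves implicit.

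Two minor points. First, the ``delicate point'' you flag in item (1) is not actually needed. Since $f_a'(x)=a e^{-ax}(1+e^{-ax})^{-2}>0$ for every $x\ge 0$, including $x=0$, you don't need $X_t+S_1>0$ almost surely. This differs from Step~2 of Theorem~\ref{thm:main-concavity}, where $f_a''(0)=0$ makes $X_t+S_2>0$ genuinely necessary. Second, rename your limit, since $L$ already denotes $\log(a+1)$ in the paper.
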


\begin{proof}
By~\eqref{eq:eta-deriv-rep} with $k=1$, 
\begin{equation*}
\eta _{a}^{\prime }(t)=\mathbb{E}[f_{a}^{\prime }(X_{t}+S_{1})].
\end{equation*}%
Lemma~\ref{lem:logistic} gives $f_{a}^{\prime }>0$ on $[0,\infty )$, and $%
X_{t}+S_{1}\geq 0$ a.s.\ with 
\begin{equation*}
\mathbb{P}(X_{t}+S_{1}>0)=1\text{ for }t>0,
\end{equation*}
so $\mathbb{E}[f_{a}^{\prime }(X_{t}+S_{1})]>0$, proving $\eta _{a}^{\prime
}(t)>0$ and consequently $\varphi _{a}(t)>0$ (item~(1)). By Theorem~\ref%
{thm:main-concavity}\ref{it:main-3}, $\varphi _{a}^{\prime }(t)<0$, so $%
\varphi _{a}$ is strictly decreasing on $(0,\infty )$ (item~(2)). For
item~(3), the monotonicity and positivity established in (1) and (2) imply
that both one-sided limits exist in $[0,\infty ]$. The leading-order
expansion of Theorem~\ref{thm:riccati-dynamics} (item~(\ref{it:rdy-4}))
below, which we now prove, yields $\lim_{t\rightarrow \infty }\varphi
_{a}(t)=0$.
\end{proof}

\subsection{Exact leading-order asymptotic expansion}

The main novelty of this section is the exact leading-order expansion of $%
\varphi_{a}(t)$ as $t\to\infty$ and the identification of the stable
manifold of the Riccati flow.

\begin{lemma}[Two-term asymptotic expansion of $\protect\eta _{a}^{(k)}$]
\label{lem:asymp-expansion} Let $a>0$, $k\in \mathbb{Z}_{+}$, and let $%
L:=\log (a+1)$. Then, as $t\rightarrow \infty $, 
\begin{equation}
\eta _{a}^{(k)}(t)\;=\;[k=0]\,\cdot
\,1\,+\,(-L)^{k}\,(a+1)^{-t}\,+\,O\!\left( (a+2)^{-t}\,\log
^{k}(2a+1)\right) ,  \label{eq:eta-k-asymp}
\end{equation}%
where $[k=0]$ is the Iverson bracket. In particular, 
\begin{align}
\eta _{a}(t)& =1-(a+1)^{-t}+O\!\left( (2a+1)^{-t}\right) , & & t\rightarrow
\infty ,  \label{eq:eta-asymp} \\
\eta _{a}^{\prime }\left( t\right) =L(a+1)^{-t}+O\!\left( (2a+1)^{-t}\right)
,&  & t& \rightarrow \infty ,  \label{eq:etap-asymp} \\
\eta _{a}^{\prime \prime }\left( t\right) =-L^{2}(a+1)^{-t}+O\!\left(
(2a+1)^{-t}\right) ,&  & t& \rightarrow \infty .  \label{eq:etapp-asymp}
\end{align}
\end{lemma}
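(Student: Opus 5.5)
The plan is to work directly with the Dirichlet series~\eqref{eq:eta-def} rather than with the Gamma representation. The first two terms of the differentiated series give the main part, and the tail is controlled by the alternating-series (Leibniz) estimate.

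\textbf{Step 1 (termwise differentiation).} I will justify
\begin{equation*}
\eta_{a}^{(k)}(t)=\sum_{m\ge 0}(-1)^{m}\bigl(-\log(am+1)\bigr)^{k}(am+1)^{-t}
\end{equation*}
on every half-line $[t_{0},\infty)$ with $t_{0}>1$. There the differentiated series is dominated by $\sum_{m}\log^{k}(am+1)(am+1)^{-t_{0}}<\infty$, so the Weierstrass M-test applies. Since the statement concerns $t\to\infty$, this range suffices. (Alternatively, Step~3 below gives uniform convergence on any $[t_{0},\infty)$ with $t_{0}>0$.)

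\textbf{Step 2 (main terms).} The $m=0$ term equals $[k=0]$, because $\log 1=0$. The $m=1$ term equals $-(-L)^{k}(a+1)^{-t}=(-1)^{k+1}L^{k}(a+1)^{-t}$.

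Here I must flag the sign. The coefficient forced by the series is $-(-L)^{k}$. This is exactly what produces the three particular cases \eqref{eq:eta-asymp}--\eqref{eq:etapp-asymp}: $-1$ for $k=0$, $+L$ for $k=1$, and $-L^{2}$ for $k=2$. So I will read the middle term of~\eqref{eq:eta-k-asymp} as $-(-L)^{k}(a+1)^{-t}$ and prove that version.

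\textbf{Step 3 (tail, the main obstacle).} The tail is $R_{k}(t)=\sum_{m\ge 2}(-1)^{m+k}g_{m}(t)$, where $g_{m}(t)=\log^{k}(am+1)\,(am+1)^{-t}$. The function $x\mapsto \log^{k}x\cdot x^{-t}$ has logarithmic derivative $(k/\log x - t)/x$. This is negative for $x\ge 2a+1$ as soon as $t>k/\log(2a+1)$. Hence for such $t$ the sequence $(g_{m}(t))_{m\ge 2}$ is decreasing in $m$ and tends to $0$. Leibniz's alternating-series estimate then gives
\begin{equation*}
|R_{k}(t)|\le g_{2}(t)=\log^{k}(2a+1)\,(2a+1)^{-t}.
\end{equation*}
This is the natural remainder; it yields \eqref{eq:eta-asymp}--\eqref{eq:etapp-asymp} with the $O((2a+1)^{-t})$ error as stated.

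\textbf{Step 4 (comparing with the stated error term).} For the displayed form with $O((a+2)^{-t}\log^{k}(2a+1))$, I would note that $2a+1\ge a+2$ exactly when $a\ge 1$, so in that range $(2a+1)^{-t}\le (a+2)^{-t}$ and~\eqref{eq:eta-k-asymp} follows.

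For $0<a<1$ the true second correction is of order $(2a+1)^{-t}$. Because $2a+1<a+2$, this term is not $O((a+2)^{-t})$. The honest statement is therefore the one with $(2a+1)^{-t}$, which still lies strictly below the main term since $2a+1>a+1$. I would present the proof with this remainder and point out the discrepancy. The only delicate point in the whole argument is the eventual monotonicity in Step~3, which is what makes the Leibniz bound legitimate.
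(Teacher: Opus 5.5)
Your proof is correct, and the two discrepancies you flag are real defects of the displayed formula \eqref{eq:eta-k-asymp}, not of your argument. First, the $m=1$ term is $-(-L)^{k}(a+1)^{-t}$, and this is the sign that \eqref{eq:eta-asymp}--\eqref{eq:etapp-asymp} actually carry. Second, the $m=2$ term $(-1)^{k}\log^{k}(2a+1)(2a+1)^{-t}$ is not cancelled by the rest of the tail, which is $O(\log^{k}(3a+1)(3a+1)^{-t})$ by your own Leibniz bound. So the remainder has exact order $(2a+1)^{-t}$ and is not $O((a+2)^{-t})$ when $0<a<1$.

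The paper's proof has the same skeleton as yours: termwise differentiation, the $m=0$ and $m=1$ terms as the main part, and the tail $m\ge 2$ as the error. It also writes the $m=1$ contribution as $(-1)(-L)^{k}(a+1)^{-t}$ and only ever obtains an $O((2a+1)^{-t})$ tail. So what it really establishes is the corrected statement you prove.

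The genuine difference is the tail estimate. The paper bounds $\sum_{m\ge 2}\log^{k}(am+1)(am+1)^{-t}$ in absolute value by $\log^{k}(2a+1)(2a+1)^{-t}\sum_{j\ge 0}(2a+1)^{-j}$. This needs $t>1$ even for the left side to be finite. The comparison is also not justified as written, since for fixed $t$ the terms decay only polynomially in $m$, not geometrically. An extra step is required, e.g.\ factor out $(2a+1)^{-t}$ and use $((2a+1)/(am+1))^{t}\le((2a+1)/(am+1))^{2}$ for $t\ge 2$.

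Your Leibniz argument uses the alternating signs instead. Once $t>k/\log(2a+1)$ the terms are monotone in $m$, and you get the explicit bound $|R_k(t)|\le\log^{k}(2a+1)(2a+1)^{-t}$ with constant $1$ and an explicit threshold. The same monotonicity, from a large enough index on, gives uniform convergence of the differentiated series on every $[t_0,\infty)$ with $t_0>0$. The paper's appeal to the Weierstrass $M$-test on compact subsets of $(0,\infty)$ needs exactly this but cannot deliver it for $t\le 1$.

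The absolute-value route, once repaired, does not use the sign pattern, so it would carry over to non-alternating coefficients such as the character sums mentioned in the outlook. For this series, yours is shorter and sharper.
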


\begin{proof}
The alternating series~\eqref{eq:eta-def} converges term-by-term for $t>0$;
for $k\geq 1$, differentiating $k$ times under the summation sign is
justified by uniform convergence of the differentiated series on compact
subsets of $(0,\infty )$ (a routine application of the Weierstrass $M$-test
for the dominant $m=1$ tail). Thus 
\begin{equation}
\eta _{a}^{(k)}(t)\;=\;\sum_{m=0}^{\infty }(-1)^{m}\,(-\log
(am+1))^{k}\,(am+1)^{-t}.  \label{eq:term-by-term}
\end{equation}%
The term $m=0$ contributes only if $k=0$ (since $\log (1)=0$ kills it for $%
k\geq 1$). The term $m=1$ contributes 
\begin{equation*}
(-1)^{1}(-\log (a+1))^{k}(a+1)^{-t}=(-L)^{k}(-1)(a+1)^{-t};
\end{equation*}%
rearranging signs gives the second summand on the right of~%
\eqref{eq:eta-k-asymp} after the convention 
\begin{equation*}
(-1)^{1}(-L)^{k}=(-L)^{k}\cdot (-1)
\end{equation*}%
is absorbed into the $k=0$ case yielding $-1$ for $k=0$ (cf.~%
\eqref{eq:eta-asymp}). The tail $m\geq 2$ is dominated by 
\begin{eqnarray*}
\sum_{m\geq 2}\log ^{k}(am+1)(am+1)^{-t} &\leq &\log
^{k}(2a+1)\,(2a+1)^{-t}\,\sum_{j\geq 0}(2a+1)^{-j} \\
&=&O((2a+1)^{-t}),
\end{eqnarray*}%
where we used that the geometric tail with ratio $(2a+1)/(3a+1)<1$ is
summable. This yields~\eqref{eq:eta-k-asymp}.
\end{proof}

\begin{theorem}[Dynamics and asymptotic phase portrait of $\protect\varphi%
_{a}$]
\label{thm:riccati-dynamics} Let $a\in(0,\infty)$ and $\varphi_{a}$ the
Riccati field in~\eqref{eq:riccati-field}. Then:

\begin{enumerate}
\item \label{it:rdy-1} The unforced autonomous Riccati equation $\varphi
^{\prime }=-\varphi ^{2}$ has the equilibrium manifold $\mathcal{M}_{\mathrm{%
eq}}=\{0\}$, which is asymptotically stable on the positive half-line.

\item \label{it:rdy-2} As $t\rightarrow \infty $ the forcing satisfies 
\begin{equation*}
q_{a}(t)=-L^{2}(a+1)^{-t}+O((2a+1)^{-t}),
\end{equation*}%
where $L=\log (a+1)$.

\item \label{it:rdy-3} The nonlinear term in~\eqref{eq:riccati-eq} is
subdominant: 
\begin{equation*}
\varphi _{a}^{2}(t)=O((a+1)^{-2t})\text{ as }t\rightarrow \infty .
\end{equation*}

\item \label{it:rdy-4} The Riccati field admits the exact two-term expansion 
\begin{equation}
\varphi _{a}(t)\;=\;L\,(a+1)^{-t}+O\!\left( (2a+1)^{-t}\right) ,\qquad
t\rightarrow \infty .  \label{eq:phi-asymp}
\end{equation}%
In particular, $\lim_{t\rightarrow \infty }\varphi _{a}(t)=0$, and the
trajectory of $\varphi _{a}$ folds asymptotically onto the stable manifold $%
\varphi _{a,\mathrm{as}}(t):=L\,(a+1)^{-t}$.
\end{enumerate}
\end{theorem}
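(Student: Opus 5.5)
The plan is to treat the four items in order, relying throughout on Lemma~\ref{lem:asymp-expansion} and Theorem~\ref{thm:prob-rep}.

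\textbf{Item~(\ref{it:rdy-1}).} This item is elementary. The unforced equation $\varphi'=-\varphi^{2}$ has the explicit solution
\begin{equation*}
\varphi(t)=\frac{\varphi_{0}}{1+\varphi_{0}(t-t_{0})},
\end{equation*}
and its only equilibrium is $0$. For every $\varphi_{0}>0$ the solution exists for all $t\ge t_{0}$, stays positive, and decays monotonically to $0$, at the rate $1/t$. This gives asymptotic stability of $\{0\}$ on the positive half-line.

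\textbf{Items~(\ref{it:rdy-2}) and (\ref{it:rdy-4}): the quotients.} Both come from dividing the two-term expansions of Lemma~\ref{lem:asymp-expansion}.
\begin{itemize}
\item For the forcing, write $q_{a}=\eta_a''/\eta_a$ with $\eta_a''=-L^{2}(a+1)^{-t}+O((2a+1)^{-t})$. Expand
\begin{equation*}
\frac{1}{\eta_a}=1+(a+1)^{-t}+O\bigl((a+1)^{-2t}+(2a+1)^{-t}\bigr),
\end{equation*}
which is legitimate since $\eta_a\in(\tfrac12,1)$ by Theorem~\ref{thm:prob-rep}.
\item Multiplying, $q_a=-L^{2}(a+1)^{-t}+O\bigl((a+1)^{-2t}+(2a+1)^{-t}\bigr)$. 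Because $(a+1)^{2}=a^{2}+2a+1>2a+1$, the term $(a+1)^{-2t}$ is $O((2a+1)^{-t})$ and can be absorbed. This proves item~(\ref{it:rdy-2}).
\item The same computation with $\eta_a'=L(a+1)^{-t}+O((2a+1)^{-t})$ gives $\varphi_a=L(a+1)^{-t}+O((2a+1)^{-t})$, which is \eqref{eq:phi-asymp}.
\item The limit $\varphi_a\to0$ is immediate from \eqref{eq:phi-asymp}. This also closes the forward reference in Proposition~\ref{prop:phi-properties}, and no circularity arises, since the expansions use only the series representation.
\end{itemize}

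\textbf{Item~(\ref{it:rdy-3}).} This follows by squaring \eqref{eq:phi-asymp}: $\varphi_a^{2}=O((a+1)^{-2t})$. Comparing with item~(\ref{it:rdy-2}), the quadratic term is of strictly smaller order than $q_a$. So in $\varphi_a'=q_a-\varphi_a^{2}$ the dynamics are asymptotically linear and forced. As a consistency check, integrating $q_a$ from $t$ to $\infty$ gives $\varphi_a(t)=-\int_t^\infty \varphi_a'=\int_t^\infty(\varphi_a^2-q_a)=L(a+1)^{-t}+O(\cdot)$, which recovers \eqref{eq:phi-asymp} independently. Together with item~(\ref{it:rdy-1}), this justifies saying that the trajectory folds onto $\varphi_{a,\mathrm{as}}$.

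\textbf{Main obstacle.} The delicate point is the remainder in Lemma~\ref{lem:asymp-expansion}. As written, its tail estimate is somewhat loose.
\begin{itemize}
\item I would first re-derive the bound properly. For $t\ge t_{1}>0$,
\begin{equation*}
\sum_{m\ge2}\log^{k}(am+1)(am+1)^{-t}\le (2a+1)^{-(t-t_1)}\sum_{m\ge2}\log^{k}(am+1)(am+1)^{-t_1}.
\end{equation*}
\item The right-hand sum is finite once $t_{1}>1$, so the tail is $O((2a+1)^{-t})$ with a constant independent of $t$.
\item Uniform convergence for differentiating term by term can be handled the same way on $[t_1,\infty)$, with $t_1>1$.
\end{itemize}
The remaining care is bookkeeping: checking that every cross term in the quotient expansions is absorbed into $O((2a+1)^{-t})$, using $(a+1)^2>2a+1$ each time.
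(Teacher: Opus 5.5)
Your proposal is correct. Items (1) and (2) follow the paper's argument: the explicit solution of $\varphi'=-\varphi^{2}$, and the quotient of the expansions in Lemma~\ref{lem:asymp-expansion}. For (3) and (4), however, you reverse the paper's logic.

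The paper computes $\eta_a'/\eta_a$ only to extract $\varphi_a^{2}=O((a+1)^{-2t})$. It then inserts (2) and (3) into $\varphi_a'=q_a-\varphi_a^{2}$ and integrates from $t$ to $\infty$. That integration needs $\lim_{s\to\infty}\varphi_a(s)=0$ as input. The paper takes this limit from Proposition~\ref{prop:phi-properties}, whose proof in turn defers to this theorem; the loop closes only because the limit can be read off the item (3) computation. You read (4) directly off the quotient and get (3) by squaring. So no a priori limit is needed, and your Riccati integration is only a consistency check.

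Your bookkeeping is also sharper. The paper's intermediate claim $\varphi_a=L(a+1)^{-t}+O((a+1)^{-2t})$ is false in general. The $m=2$ term $-\log(2a+1)(2a+1)^{-t}$ of $\eta_a'$ survives in the quotient, and $(a+1)^{2}>2a+1$ makes it dominate $(a+1)^{-2t}$. Since only $\varphi_a^{2}=O((a+1)^{-2t})$ is used afterwards, the theorem still stands, and your remainder $O((2a+1)^{-t})$ is the correct order.

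What the paper's route buys is thematic: it presents (4) as an output of the Riccati dynamics, whereas in your argument the Riccati equation plays no role.

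Your repair of the tail estimate, restricting to $t\ge t_1>1$ and factoring out $(2a+1)^{-(t-t_1)}$, is warranted. The geometric-sum bound in the proof of Lemma~\ref{lem:asymp-expansion} is not valid as written, since its left side diverges for $t\le 1$. Its Weierstrass M-test justification of term-by-term differentiation likewise fails for $t\le 1$.

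You are also right to use the special cases \eqref{eq:eta-asymp}--\eqref{eq:etapp-asymp} rather than the general display \eqref{eq:eta-k-asymp}. That display has the wrong sign on the $(a+1)^{-t}$ term, which should read $-(-L)^{k}(a+1)^{-t}$. Its $(a+2)^{-t}$ remainder also fails for $a<1$, where $2a+1<a+2$.
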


\begin{proof}
\textbf{(1):} The autonomous ODE $\varphi ^{\prime }=-\varphi ^{2}$ admits,
for any initial value $\varphi (t_{0})=v_{0}>0$, the explicit solution 
\begin{equation*}
\varphi (t)=(t-t_{0}+1/v_{0})^{-1}\rightarrow 0\text{ as }t\rightarrow
\infty .
\end{equation*}%
The zero solution is therefore globally attracting from positive initial
data, proving that $\mathcal{M}_{\mathrm{eq}}=\{0\}$ is asymptotically
stable.

\textbf{(2):} Immediate from~\eqref{eq:eta-asymp}--\eqref{eq:etapp-asymp}: 
\begin{eqnarray*}
q_{a}(t) &=&\eta _{a}^{\prime \prime }(t)/\eta _{a}(t) \\
&=&-L^{2}(a+1)^{-t}(1+O((a+1)^{-t})) \\
&=&-L^{2}(a+1)^{-t}+O((2a+1)^{-t}).
\end{eqnarray*}

\textbf{(3):} From~\eqref{eq:etap-asymp} and~\eqref{eq:eta-asymp}, 
\begin{eqnarray*}
\varphi _{a}(t) &=&L(a+1)^{-t}(1+O((a+1)^{-t})) \\
&=&L(a+1)^{-t}+O((a+1)^{-2t});
\end{eqnarray*}
squaring yields 
\begin{equation*}
\varphi _{a}^{2}(t)=L^{2}(a+1)^{-2t}+O((a+1)^{-3t}).
\end{equation*}

\textbf{(4):} Combining items (2) and (3) into the Riccati equation~%
\eqref{eq:riccati-eq}, we obtain 
\begin{eqnarray*}
\varphi _{a}^{\prime }(t) &=&q_{a}(t)-\varphi _{a}^{2}(t) \\
&=&-L^{2}(a+1)^{-t}+O((2a+1)^{-t}+(a+1)^{-2t}) \\
&=&-L^{2}(a+1)^{-t}+O((2a+1)^{-t})\text{ as }t\rightarrow \infty ,
\end{eqnarray*}%
since $(a+1)^{-2t}\leq (2a+1)^{-t}$ for $a>0$ and $t\geq 1$. Integrating
from $t$ to $\infty $ and using $\lim_{s\rightarrow \infty }\varphi _{a}(s)=0
$ (proved in Proposition~\ref{prop:phi-properties}~(3) modulo the limit,
which we now recover) yields 
\begin{eqnarray*}
\varphi _{a}(t)\; &=&\;-\int_{t}^{\infty }\varphi _{a}^{\prime }(s)\,ds\; \\
&=&\;\int_{t}^{\infty }L^{2}(a+1)^{-s}ds\;+\;O\!\left( (2a+1)^{-t}\right) \;
\\
&=&\;L(a+1)^{-t}+O\!\left( (2a+1)^{-t}\right) ,
\end{eqnarray*}%
where we used 
\begin{equation*}
\int_{t}^{\infty }(a+1)^{-s}ds=L^{-1}(a+1)^{-t}.
\end{equation*}%
This proves~\eqref{eq:phi-asymp} and, in particular, $\lim_{t\rightarrow
\infty }\varphi _{a}(t)=0$.
\end{proof}

\subsection{Asymptotic ratio of $\protect\varphi_{a}$ to $\protect\varphi%
_{a,e}$ and curvature inequality}

We now compare the Riccati field $\varphi_{a}$ with the natural reference
curve 
\begin{equation}  \label{eq:phi-eq-def}
\varphi_{a,e}(t)\;:=\;-\frac{q_{a}(t)}{2}\;=\;-\frac{\eta_{a}^{\prime \prime
}(t)}{2\eta_{a}(t)}, \qquad t>0,
\end{equation}
inspired by the radial triality~\cite{triality}, where the analogous
quantity plays the role of an algebraic anchor against which the Riccati
trajectory is measured. By Theorem~\ref{thm:main-concavity}~\ref{it:main-2}, 
$q_{a}(t)<0$, hence $\varphi_{a,e}(t)>0$ for every $t>0$; combined with
Proposition~\ref{prop:phi-properties}~(1), both quantities under comparison
are strictly positive. The natural question is the limiting behaviour of
their ratio.

\begin{proposition}[Exact asymptotic ratio and finite-$t$ trapping inequality%
]
\label{prop:trapping} Let $a\in(0,\infty)$ and $L:=\log(a+1)$. Then 
\begin{equation}  \label{eq:asymp-ratio}
\lim_{t\to\infty}\frac{\varphi_{a}(t)}{\varphi_{a,e}(t)} \;=\;\frac{2}{L}
\;=\;\frac{2}{\log(a+1)}.
\end{equation}

\noindent In particular:

\begin{itemize}
\item[(i)] If $0<a<e^{2}-1$ (equivalently $L<2$), then the limit in %
\eqref{eq:asymp-ratio} exceeds $1$. Hence there exists a threshold $%
T_{*}=T_{*}(a)>0$ such that 
\begin{equation}  \label{eq:trapping-ineq-L<2}
0\;<\;\varphi_{a,e}(t)\;<\;\varphi_{a}(t), \qquad \forall\,t>T_{*}(a).
\end{equation}

\item[(ii)] If $a>e^{2}-1$ (equivalently $L>2$), then the limit in %
\eqref{eq:asymp-ratio} is strictly smaller than $1$. Hence there exists a
threshold $T_{\ast }=T_{\ast }(a)>0$ such that 
\begin{equation}
0\;<\;\varphi _{a}(t)\;<\;\varphi _{a,e}(t),\qquad \forall \,t>T_{\ast }(a).
\label{eq:trapping-ineq-L>2}
\end{equation}%
\noindent For the two classical cases $a=1$ and $a=2$ (both satisfying $%
a<e^{2}-1$), the threshold can be computed numerically by bisection on $%
\varphi _{a}(T_{\ast })=\varphi _{a,e}(T_{\ast })$, yielding 
\begin{equation}
T_{\ast }(1)\approx 0.4448,\qquad T_{\ast }(2)\approx 0.4156,
\label{eq:Tstar-values}
\end{equation}%
both strictly smaller than $\tfrac{1}{2}$. In particular, %
\eqref{eq:trapping-ineq-L<2} holds on $(\tfrac{1}{2},\infty )$ for the
classical Dirichlet eta ($a=1$) and Dirichlet beta ($a=2$) functions.
\end{itemize}
\end{proposition}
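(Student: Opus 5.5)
The limit \eqref{eq:asymp-ratio} follows directly from the expansions already established, so the plan is short. By Theorem~\ref{thm:riccati-dynamics}~(\ref{it:rdy-2}) and (\ref{it:rdy-4}),
\begin{equation*}
\varphi_{a}(t)=L(a+1)^{-t}\bigl(1+O(\rho^{t})\bigr),\qquad
\varphi_{a,e}(t)=-\tfrac12 q_{a}(t)=\tfrac12 L^{2}(a+1)^{-t}\bigl(1+O(\rho^{t})\bigr),
\end{equation*}
with $\rho:=(a+1)/(2a+1)<1$. Here we factor $(a+1)^{-t}$ out of the error terms $O((2a+1)^{-t})$. Since $L>0$ for $a>0$, dividing the two expansions gives
\begin{equation*}
\frac{\varphi_{a}(t)}{\varphi_{a,e}(t)}=\frac{2}{L}\cdot\frac{1+O(\rho^{t})}{1+O(\rho^{t})}\longrightarrow \frac{2}{L}.
\end{equation*}
If one prefers not to depend on the integration step in the proof of Theorem~\ref{thm:riccati-dynamics}, the same conclusion follows from Lemma~\ref{lem:asymp-expansion} alone. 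Indeed,
\begin{equation*}
\frac{\varphi_{a}}{\varphi_{a,e}}=\frac{-2\eta_{a}'}{\eta_{a}''},
\end{equation*}
which is the ratio $2L(a+1)^{-t}\bigl(1+O(\rho^{t})\bigr)\big/\bigl(L^{2}(a+1)^{-t}(1+O(\rho^{t}))\bigr)$. The factor $\eta_{a}$ cancels, so this is actually the cleaner route, and I would present it.

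For (i) and (ii), both functions are strictly positive on $(0,\infty)$. This is Proposition~\ref{prop:phi-properties}~(1) for $\varphi_{a}$, and Theorem~\ref{thm:main-concavity}~\ref{it:main-2} for $\varphi_{a,e}$. So the ratio is a well-defined continuous positive function of $t$. If $L<2$, the limit $2/L$ exceeds $1$, and by the definition of the limit, with $\varepsilon=(2/L-1)/2$, there is $T_{*}$ such that the ratio exceeds $1$ for $t>T_{*}$. Multiplying by $\varphi_{a,e}(t)>0$ gives \eqref{eq:trapping-ineq-L<2}. Case (ii) is symmetric with $\varepsilon=(1-2/L)/2$.

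For the equivalence with the curvature inequality, multiply $\varphi_{a,e}<\varphi_{a}$ by $2\eta_{a}>0$. This gives $-\eta_{a}''<2\eta_{a}'$, that is, $\eta_{a}''+2\eta_{a}'>0$.

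The main obstacle is the numerical claim \eqref{eq:Tstar-values} together with the assertion that the inequality holds on all of $(\tfrac12,\infty)$ for $a=1,2$. The limit argument only gives existence of some threshold, not its location. To fix the location I would proceed in two parts.

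First, I would obtain an explicit $T_{0}$ beyond which the asymptotic bounds already force the ratio above $1$. This means making the $O(\cdot)$ constants of Lemma~\ref{lem:asymp-expansion} explicit through the alternating-series tail bounds: the tail is bounded by its first term $\log^{k}(2a+1)(2a+1)^{-t}$ once the terms are monotone.

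Second, on the compact interval $[\tfrac12,T_{0}]$, I would verify $\eta_{a}''+2\eta_{a}'>0$ by rigorous evaluation of $\eta_{a}',\eta_{a}''$. For this I would use interval arithmetic or the geometric-rate algorithm with its error bound from Section~\ref{sec:fast-algorithm}, combined with a Lipschitz/mesh argument that bounds $\eta_{a}'''$ on the interval.

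The values $T_{*}(1)\approx0.4448$ and $T_{*}(2)\approx0.4156$ themselves are then reported as bisection outputs. Bisection is justified only if the crossing is unique. I would either check uniqueness numerically or state them as the largest crossing point.
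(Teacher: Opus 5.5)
Your proof of \eqref{eq:asymp-ratio} and of (i)--(ii) is the paper's argument. You divide the $k=1,2$ expansions of Lemma~\ref{lem:asymp-expansion} and then use positivity, continuity and the value of the limit; writing the ratio as $-2\eta_a'/\eta_a''$, so that $\eta_a$ cancels, is a harmless simplification. For the clause about $a=1,2$ the paper only quotes a bisection run. Your certified scheme (explicit alternating-tail constants beyond some $T_0$, rigorous evaluation on $[\tfrac12,T_0]$) goes beyond the paper and is the right way to prove that \eqref{eq:trapping-ineq-L<2} holds on $(\tfrac12,\infty)$.

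The step that fails is the last one. You cannot report $T_*(1)\approx0.4448$ and $T_*(2)\approx0.4156$ as crossing points, unique or largest, because $\varphi_a-\varphi_{a,e}$ has no zero on $(0,1]$ for $a=1,2$. For $0<t\le1$, $\eta_a'(t)\ge\eta_a'(1)$ since $\eta_a''<0$. Also $\eta_a''(t)=\mathbb{E}[f_a''(X_t+S_2)]\ge\min_{x\ge0}f_a''(x)=-a^2/(6\sqrt3)$, the minimum being attained where $\tanh(ax/2)=1/\sqrt3$. Hence
\[
\eta_a''(t)+2\eta_a'(t)\;\ge\;2\eta_a'(1)-\frac{a^{2}}{6\sqrt{3}},\qquad 0<t\le 1 .
\]
This is $2(0.159869)-0.096225>0$ for $a=1$. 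For $a=2$ it is $2(0.192901)-0.384900>0$, using $\beta'(1)=\tfrac{\pi}{4}\bigl(\gamma+2\log2+3\log\pi-4\log\Gamma(\tfrac14)\bigr)$.

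The quoted thresholds, like the $t=0.5$ rows of Table~\ref{tab:num-validation}, are artifacts of cutting the series \eqref{eq:term-by-term} at $N=10^5$ terms. For $t\le1$ that series converges only conditionally, and the truncation error in $\eta_1''(\tfrac12)$ is about $\tfrac12\log^2N/\sqrt N\approx0.21$. This matches the gap between the value $-\eta_1''(\tfrac12)=2\varphi_{1,e}(\tfrac12)\eta_1(\tfrac12)\approx0.277$ implied by the table and the true value $\approx0.068$, which follows from the Taylor expansion of $\eta$ at $1$ via the Stieltjes constants.

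If you extend your certified evaluation below $\tfrac12$, it refutes these numbers rather than confirming them. The correct outcome is that the inequality holds on $(\tfrac12,\infty)$, and already on all of $(0,1]$, with no bisection value to report.
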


\begin{proof}
By Lemma~\ref{lem:asymp-expansion}~\eqref{eq:etap-asymp} and %
\eqref{eq:etapp-asymp}, 
\begin{align*}
\varphi _{a}(t)& =\frac{\eta _{a}^{\prime }(t)}{\eta _{a}(t)}\; \\
& =\;\frac{L(a+1)^{-t}+O\!\left( (2a+1)^{-t}\right) }{1-(a+1)^{-t}+O\!\left(
(2a+1)^{-t}\right) }\; \\
& =\;L\,(a+1)^{-t}+O\!\left( (2a+1)^{-t}\right) , \\
\varphi _{a,e}(t)& =-\tfrac{1}{2}\,\frac{\eta _{a}^{\prime \prime }(t)}{\eta
_{a}(t)}\; \\
& =\;\frac{(L^{2}/2)\,(a+1)^{-t}+O\!\left( (2a+1)^{-t}\right) }{%
1-(a+1)^{-t}+O\!\left( (2a+1)^{-t}\right) }\; \\
& =\;\tfrac{L^{2}}{2}\,(a+1)^{-t}+O\!\left( (2a+1)^{-t}\right) ,
\end{align*}%
where we used the identity 
\begin{equation*}
1/(1+u)=1+O(u)\text{ for }|u|<1/2
\end{equation*}%
(valid for $t$ large enough that $(a+1)^{-t}<1/2$). Dividing the two
asymptotic expansions and using 
\begin{equation*}
(2a+1)^{-t}/(a+1)^{-t}=((a+1)/(2a+1))^{t}\rightarrow 0\text{ as }%
t\rightarrow \infty 
\end{equation*}%
(since $(a+1)/(2a+1)\in (0,1)$ for $a>0$), we obtain 
\begin{equation*}
\frac{\varphi _{a}(t)}{\varphi _{a,e}(t)}\;=\;\frac{%
L(a+1)^{-t}+O((2a+1)^{-t})}{(L^{2}/2)(a+1)^{-t}+O((2a+1)^{-t})}\;%
\xrightarrow[t\to\infty]{}\;\frac{L}{L^{2}/2}\;=\;\frac{2}{L},
\end{equation*}%
which is~\eqref{eq:asymp-ratio}.

Suppose now $L<2$, so that the limit~\eqref{eq:asymp-ratio} exceeds $1$. By
continuity of the strictly positive function $t\mapsto
\varphi_{a}(t)/\varphi_{a,e}(t)$ on $(0,\infty)$ and the existence of the
limit at infinity (which is $>1$), there exists $T_{*}=T_{*}(a)>0$ such that 
$\varphi_{a}(t)/\varphi_{a,e}(t)>1$ for every $t>T_{*}$; multiplying by $%
\varphi_{a,e}(t)>0$ gives $\varphi_{a,e}(t)<\varphi_{a}(t)$, and combined
with $\varphi_{a,e}(t)>0$ yields~\eqref{eq:trapping-ineq-L<2}. The explicit
numerical values~\eqref{eq:Tstar-values} for $a=1$ and $a=2$ are obtained by
bisection (each verifies $\varphi_{a}(T_{*})=\varphi_{a,e}(T_{*})$ to $%
10^{-6}$ in the high-precision implementation.
The case $L>2$ (equivalently $a>e^{2}-1$) is treated in the same way, the only
difference being that the inequality reverses, yielding
$\varphi_{a}(t)<\varphi_{a,e}(t)$ for all $t>T_{*}(a)$.
\end{proof}

\begin{corollary}[Structural curvature inequality]
\label{cor:curvature} Let $0<a<e^{2}-1$ and let $T_{\ast }(a)$ be as in
Proposition~\ref{prop:trapping}. Then 
\begin{equation}
\eta _{a}^{\prime \prime }(t)+2\,\eta _{a}^{\prime }(t)\;>\;0,\qquad \forall
\,t>T_{\ast }(a).  \label{eq:curvature-ineq}
\end{equation}%
For the classical Dirichlet eta ($a=1$) and Dirichlet beta ($a=2$), %
\eqref{eq:curvature-ineq} holds on $(\tfrac{1}{2},\infty )$. \medskip
\noindent \textbf{In particular}, since%
\[
\bigl(e^{2t}\eta _{a}^{\prime }(t)\bigr)^{\prime }=e^{2t}\bigl(\eta
_{a}^{\prime \prime }(t)+2\eta _{a}^{\prime }(t)\bigr),
\]%
the positivity of \eqref{eq:curvature-ineq} implies that $e^{2t}\eta
_{a}^{\prime }(t)$ is strictly increasing on $(T_{\ast }(a),\infty )$. Thus
the rescaled derivative $e^{2t}\eta _{a}^{\prime }(t)$ has no oscillations
and its growth is monotone, providing a sharp control on the decay rate of $%
\eta _{a}^{\prime }(t)$ and reinforcing the rigidity of the Riccati--Gamma
asymptotics.
\end{corollary}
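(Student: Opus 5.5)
The plan is to reduce \eqref{eq:curvature-ineq} directly to the trapping inequality \eqref{eq:trapping-ineq-L<2} of Proposition~\ref{prop:trapping}(i). The link is an algebraic identity between $\varphi_{a}$ and $\varphi_{a,e}$. By definition \eqref{eq:phi-eq-def}, $\varphi_{a,e}(t)=-\eta_{a}''(t)/(2\eta_{a}(t))$, and $\varphi_{a}(t)=\eta_{a}'(t)/\eta_{a}(t)$. Hence
\[
\varphi_{a}(t)-\varphi_{a,e}(t)=\frac{2\eta_{a}'(t)+\eta_{a}''(t)}{2\eta_{a}(t)}.
\]
Theorem~\ref{thm:prob-rep} gives $\eta_{a}(t)\in(\tfrac12,1)$, so $\eta_{a}(t)>0$. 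Therefore $\eta_{a}''(t)+2\eta_{a}'(t)>0$ holds exactly when $\varphi_{a,e}(t)<\varphi_{a}(t)$. For $0<a<e^{2}-1$, Proposition~\ref{prop:trapping}(i) supplies this on $(T_{*}(a),\infty)$, and the main claim follows at once.

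For the ``in particular'' part, the product rule gives $(e^{2t}\eta_{a}')'=e^{2t}(\eta_{a}''+2\eta_{a}')$. Since $e^{2t}>0$, this derivative is strictly positive on $(T_{*}(a),\infty)$. By the mean value theorem, $e^{2t}\eta_{a}'(t)$ is therefore strictly increasing there. The smoothness needed for this is provided by Theorem~\ref{thm:riccati-identity}.

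The main obstacle is the claim for $a=1,2$ on the whole interval $(\tfrac12,\infty)$. The proposition only gives this through the numerical thresholds $T_{*}(1)\approx0.4448$ and $T_{*}(2)\approx0.4156$. Even granting those values, we also need the ratio $\varphi_{a}/\varphi_{a,e}$ to stay above $1$ on all of $(T_{*},\infty)$, not just beyond some unspecified threshold.

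I would make this rigorous by a validated-numerics argument in two parts.
\begin{itemize}
\item \emph{Tail region.} On $[T_{0},\infty)$ for some moderate $T_{0}$, use the explicit series \eqref{eq:term-by-term}. The alternating tails (beyond $m=1$ or $m=2$) have explicit bounds, which show directly that $2\eta_{a}'+\eta_{a}''>0$. Indeed, the leading term $(2L-L^{2})(a+1)^{-t}$ is positive because $L<2$, and it dominates the $O((2a+1)^{-t})$ remainder once the constants are written out.
\item \emph{Compact region.} On $[\tfrac12,T_{0}]$, use interval evaluation with the geometric-rate algorithm and its error bound (the $k=1,2$ cases of the cited series). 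This certifies the sign on a finite grid, and a Lipschitz bound on $\eta_{a}'''$ covers the points between grid nodes.
\end{itemize}
Since the paper treats the thresholds as computed, I would present this part as relying on \eqref{eq:Tstar-values} together with the certified computation.
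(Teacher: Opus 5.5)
Your reduction of \eqref{eq:curvature-ineq} is exactly the paper's proof. With $\eta_a>0$, the identity $\varphi_a-\varphi_{a,e}=(\eta_a''+2\eta_a')/(2\eta_a)$ makes the inequality equivalent to $\varphi_{a,e}<\varphi_a$, which Proposition~\ref{prop:trapping}(i) supplies on $(T_*(a),\infty)$. The monotonicity of $e^{2t}\eta_a'$ is the same product-rule remark.

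Where you differ is the clause for $a=1,2$ on $(\tfrac12,\infty)$. The paper gives nothing there beyond the bisection values \eqref{eq:Tstar-values}, and you are right that a computed root does not exclude later crossings. Your validated-numerics plan would work, but a two-line argument from Lemma~\ref{lem:diff-gamma} does better:
\begin{itemize}
\item Write $\eta_a'(t)=\mathbb{E}[f_a'(X_t+S_1)]$ and $\eta_a''(t)=\mathbb{E}[f_a''(X_t+S_2)]$, where $S_2=S_1+U_2T_2\ge S_1$.
\item By \eqref{eq:fa-1}--\eqref{eq:fa-2}, $f_a''=af_a'(1-2f_a)>-af_a'$ on $[0,\infty)$, and $f_a'$ is decreasing there.
\item Hence $f_a''(X_t+S_2)>-af_a'(X_t+S_2)\ge -af_a'(X_t+S_1)$ almost surely.
\end{itemize}
Taking expectations gives $\eta_a''+a\eta_a'>0$ on $(0,\infty)$ for every $a>0$. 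Since $\eta_a'>0$, this yields $\eta_a''+2\eta_a'>0$ on all of $(0,\infty)$ whenever $a\le 2$. That settles $a=1,2$ with no numerics and no threshold. It does not reach $a\in(2,e^2-1)$, where only the non-explicit threshold of Proposition~\ref{prop:trapping} is available.

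The same bound also says $\varphi_a>(2/a)\,\varphi_{a,e}\ge\varphi_{a,e}$ for $a\le 2$. So $\varphi_a=\varphi_{a,e}$ has no solution, and $T_*(1)\approx 0.4448$, $T_*(2)\approx 0.4156$ cannot be crossing points as the paper claims; they are numerical artifacts. For instance, the entry $\varphi_{1,e}(0.5)=0.2294$ in Table~\ref{tab:num-validation} is impossible. From $|\eta_1''|\le\sup|f_1''|=1/(6\sqrt3)$ and $\eta_1>\tfrac12$ one gets $\varphi_{1,e}<1/(6\sqrt3)\approx 0.096$. So do not present this clause as ``relying on \eqref{eq:Tstar-values}'', as your last sentence proposes. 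Your certified sign computation, or the argument above, proves it without them.
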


\begin{proof}
By Theorem~\ref{thm:prob-rep}, $\eta_{a}(t)>0$ for all $t>0$. Multiplying
the strict inequality $\varphi_{a,e}(t)<\varphi_{a}(t)$ of Proposition~\ref%
{prop:trapping} by the strictly positive factor $2\eta_{a}(t)>0$ yields $%
-\eta_{a}^{\prime \prime }(t)<2\eta_{a}^{\prime }(t)$, equivalently $%
\eta_{a}^{\prime \prime }(t)+2\eta_{a}^{\prime }(t)>0$.
\end{proof}

\begin{remark}[Series form of the curvature inequality]
\label{rem:curvature-series} The differential operator $D:=\tfrac{d^{2}}{%
dt^{2}}+2\tfrac{d}{dt}$ applied term-by-term in~\eqref{eq:eta-def} (which is
permitted by the uniform convergence of the differentiated series, cf.\ the
proof of Lemma~\ref{lem:asymp-expansion}) produces the exact identity 
\begin{equation}
\eta _{a}^{\prime \prime }(t)+2\eta _{a}^{\prime }(t)\;=\;\sum_{m=1}^{\infty
}\frac{(-1)^{m}\,h_{a}(m)}{(am+1)^{t}},\qquad h_{a}(m)\;:=\;\log (am+1)\bigl(%
\log (am+1)-2\bigr).  \label{eq:alt-curv}
\end{equation}%
The weight $h_{a}$ vanishes at $am+1=e^{2}$ and is strictly negative for $%
am+1<e^{2}$; in particular, $h_{a}(1)=L(L-2)<0$ when $L<2$, i.e.\ $a<e^{2}-1$%
. The leading $m=1$ term is therefore 
\begin{equation*}
(-1)\cdot h_{a}(1)\cdot (a+1)^{-t}=L(2-L)(a+1)^{-t}>0,
\end{equation*}
and the exponentially fast decay $(am+1)^{-t}$ for $t$ moderately large
ensures the alternating sum is dominated by its (positive) leading term.
This furnishes an independent analytic derivation of~%
\eqref{eq:curvature-ineq} for $a\in (0,e^{2}-1)$, complementing the
dynamical proof via Proposition~\ref{prop:trapping}. When $L>2$ (i.e.\ $%
a>e^{2}-1$), the leading term becomes negative and the inequality reverses,
consistent with the asymptotic ratio $2/L<1$ predicted by~%
\eqref{eq:asymp-ratio}.
\end{remark}
\begin{remark}[Asymptotic ratio for higher-order Riccati quotients]
\label{rem:higher-order-ratio} For each integer $k\geq 1$, define the
higher-order Riccati-type quotient%
\[
\varphi _{a,k}(t):=\frac{\eta _{a}^{(k)}(t)}{\eta _{a}^{(k-1)}(t)},\qquad
t>0,
\]%
and the corresponding reference curve%
\[
\varphi _{a,e,k}(t):=-\,\frac{\eta _{a}^{(k+1)}(t)}{2\,\eta _{a}^{(k-1)}(t)}.
\]%
Using the two-term asymptotic expansion of $\eta _{a}^{(m)}(t)$ from Lemma~%
\ref{lem:asymp-expansion},%
\[
\eta _{a}^{(m)}(t)=(-1)^{m}\,(\log (a+1))^{m}\,(a+1)^{-t}+O\!\left(
(2a+1)^{-t}\right) ,\qquad t\rightarrow \infty ,
\]%
we obtain, for every $k\geq 1$,%
\[
\varphi _{a,k}(t)\sim -\,\log (a+1),\qquad \varphi _{a,e,k}(t)\sim -\,\frac{%
\log ^{2}(a+1)}{2},\qquad t\rightarrow \infty .
\]%
Consequently, the asymptotic ratio is \emph{independent of the order $k$}:%
\[
\boxed{
\displaystyle
\lim_{t\to\infty}
\frac{\varphi_{a,k}(t)}{\varphi_{a,e,k}(t)}
=
\frac{2}{\log(a+1)}.
}
\]%
Thus the same constant $2/\log (a+1)$ governing the first-order Riccati
field (Proposition~\ref{prop:trapping}) persists for all higher-order
quotients $\varphi _{a,k}$.
\end{remark}
\begin{remark}[Order of convergence of the higher-order Riccati quotients]
\label{rem:order-convergence} The universal asymptotic ratio%
\[
\lim_{t\rightarrow \infty }\frac{\varphi _{a,k}(t)}{\varphi _{a,e,k}(t)}=%
\frac{2}{\log (a+1)}\qquad (k\geq 1)
\]%
implies that all higher-order Riccati-type quotients%
\[
\varphi _{a,k}(t)=\frac{\eta _{a}^{(k)}(t)}{\eta _{a}^{(k-1)}(t)}
\]%
converge to their limits with the \emph{same exponential order}. Indeed,
from Lemma~\ref{lem:asymp-expansion},%
\[
\eta _{a}^{(m)}(t)=(-1)^{m}(\log (a+1))^{m}(a+1)^{-t}+O\!\left(
(2a+1)^{-t}\right) ,
\]%
so both $\varphi _{a,k}(t)$ and $\varphi _{a,e,k}(t)$ admit expansions of
the form%
\[
C_{1}+O\!\left( \left( \tfrac{2a+1}{a+1}\right) ^{-t}\right) ,\qquad
C_{2}+O\!\left( \left( \tfrac{2a+1}{a+1}\right) ^{-t}\right) ,
\]%
with constants $C_{1},C_{2}$ independent of $t$. Thus the convergence rate
is governed by the same exponential factor%
\[
\left( \frac{2a+1}{a+1}\right) ^{-t},
\]%
and is therefore \emph{independent of the derivative order $k$}. Higher
derivatives do not alter the asymptotic speed of convergence of the Riccati
quotients.
\end{remark}
\begin{remark}[Asymptotic rigidity of the Riccati-Gamma structure]
\label{rem:asymptotic-rigidity} A striking feature of the Riccati-Gamma
framework is the following \emph{asymptotic rigidity phenomenon}. Although
the higher-order quotients%
\[
\varphi _{a,k}(t)=\frac{\eta _{a}^{(k)}(t)}{\eta _{a}^{(k-1)}(t)},\qquad
k\geq 1,
\]%
arise from derivatives of increasing order, their large $t$ behaviour is
governed by the \emph{same} exponential scale. Indeed, Lemma~5.2 shows that%
\[
\eta _{a}^{(m)}(t)=(-1)^{m}(\log (a+1))^{m}(a+1)^{-t}+O\!\left(
(2a+1)^{-t}\right) ,\qquad t\rightarrow \infty ,
\]%
so every quotient $\varphi _{a,k}$ admits the universal asymptotic expansion%
\[
\varphi _{a,k}(t)=-\,\log (a+1)+O\!\left( \left( \tfrac{2a+1}{a+1}\right)
^{-t}\right) ,\qquad t\rightarrow \infty ,
\]%
independent of $k$. The same holds for the reference curves%
\[
\varphi _{a,e,k}(t)=-\,\frac{\log ^{2}(a+1)}{2}+O\!\left( \left( \tfrac{2a+1%
}{a+1}\right) ^{-t}\right) .
\]%
Consequently, the asymptotic ratio%
\[
\frac{\varphi _{a,k}(t)}{\varphi _{a,e,k}(t)}\longrightarrow \frac{2}{\log
(a+1)}\qquad (t\rightarrow \infty )
\]%
is \emph{universal across all derivative orders}. Thus, the Riccati-Gamma
dynamics enforce a rigid asymptotic geometry: higher derivatives do not
alter the exponential scale, the limiting slope, or the asymptotic trapping
behaviour. This rigidity is a structural signature of the Mellin-Laplace
representation and has no analogue in classical Dirichlet series arguments.
\end{remark}

\subsection{Asymptotic stability of the reference curve}

\begin{proposition}[Asymptotic stability of $\protect\varphi_{a,e}$]
\label{prop:stability} Let $a\in(0,\infty)$. The reference curve $%
t\mapsto\varphi_{a,e}(t)$ defined in~\eqref{eq:phi-eq-def} is, in the sense
of linearised perturbations, an exponentially stable trajectory of the
non-autonomous Riccati flow $\varphi^{\prime }=q_{a}(t)-\varphi^{2}$ on $%
(0,\infty)$: every perturbation $\delta\varphi(t)$ of the linearised
equation decays to zero as $t\to\infty$.
\end{proposition}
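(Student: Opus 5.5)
The plan is to linearise the non-autonomous Riccati flow $\varphi'=q_{a}(t)-\varphi^{2}$ around the reference curve and integrate the resulting scalar linear equation explicitly. Write $\varphi=\varphi_{a,e}+\delta\varphi$, substitute into $\varphi'=q_{a}-\varphi^{2}$, and discard the quadratic term $(\delta\varphi)^{2}$. Because the perturbation equation is taken in the linearised sense, the terms not involving $\delta\varphi$ are treated as the base flow along $\varphi_{a,e}$. The variational equation is then
\begin{equation*}
(\delta\varphi)'(t)\;=\;-2\,\varphi_{a,e}(t)\,\delta\varphi(t),\qquad t>0,
\end{equation*}
with explicit solution
\begin{equation*}
\delta\varphi(t)\;=\;\delta\varphi(t_{0})\exp\!\Bigl(-2\int_{t_{0}}^{t}\varphi_{a,e}(s)\,ds\Bigr).
\end{equation*}

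The first structural step uses $\varphi_{a,e}(t)=-q_{a}(t)/2>0$, which follows from Theorem~\ref{thm:main-concavity}~\ref{it:main-2}. It shows that the exponent is strictly decreasing in $t$. Hence $|\delta\varphi|$ is strictly decreasing on $(t_{0},\infty)$ for every nonzero initial perturbation. This is the contraction part of the stability statement. It is uniform in $t_{0}$ because the rate $2\varphi_{a,e}$ is bounded on compacts: it is continuous, and by Lemma~\ref{lem:diff-gamma} $|\eta_{a}''|\le\|f_{a}''\|_{\infty}$ while $\eta_{a}>1/2$.

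The second step is to quantify the decay through the size of $\int_{t_{0}}^{\infty}\varphi_{a,e}(s)\,ds$. I would insert the expansion $\varphi_{a,e}(t)=\tfrac{L^{2}}{2}(a+1)^{-t}+O((2a+1)^{-t})$ obtained in the proof of Proposition~\ref{prop:trapping}, and compute the integral against the exponential profile, as in the proof of Theorem~\ref{thm:riccati-dynamics}~(4). This gives $2\int_{t}^{\infty}\varphi_{a,e}=L(a+1)^{-t}+O((2a+1)^{-t})$. The perturbation therefore settles exponentially fast, at rate $(a+1)^{-t}$, onto its asymptotic value $\delta\varphi(t_{0})\exp(-2\int_{t_{0}}^{\infty}\varphi_{a,e})$.

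The main obstacle is precisely this last point. Decay of $\delta\varphi$ to zero requires $\int^{\infty}\varphi_{a,e}=\infty$. The expansion above shows instead that this integral is finite, so the limiting value is a fixed nonzero fraction of $\delta\varphi(t_{0})$. To reach the conclusion as worded, I would first check whether the linearisation should be taken against a rescaled variable. The natural candidate is the relative perturbation $\delta\varphi/\varphi_{a,\mathrm{as}}$, or equivalently the variable on the logarithmic time scale in which the base flow is the stable manifold of Theorem~\ref{thm:riccati-dynamics}~(1). In such a variable an extra dissipative term of size $L$ appears, and exponential decay follows from the same explicit integration. If no such reformulation is available, the argument above yields only strict monotone contraction with exponentially fast convergence. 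That is the part of the claim that can be established rigorously, and I would state clearly where it falls short of decay to zero.
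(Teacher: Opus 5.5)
Your linearisation and its explicit integration are exactly the paper's. Since $-2\varphi_{a,e}=q_a$, both give $(\delta\varphi)'=q_a\,\delta\varphi$ and $\delta\varphi(t)=\delta\varphi(t_0)\exp\bigl(\int_{t_0}^{t}q_a(s)\,ds\bigr)$. Your diagnosis of the obstruction is also correct. It is fatal to the statement itself, not only to your argument, so the gap you flag cannot be closed.

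The obstruction does not even need the asymptotic lemma. From $\eta_a>\tfrac12$, $\eta_a''<0$ and $\eta_a'>0$ you get $|q_a|=-\eta_a''/\eta_a<-2\eta_a''$, hence
\[
\int_{t_0}^{\infty}|q_a(s)|\,ds\;\le\;2\,\eta_a'(t_0),\qquad |\delta\varphi(t)|\;\ge\;e^{-2\eta_a'(t_0)}\,|\delta\varphi(t_0)|\quad\text{for all } t\ge t_0.
\]
So no nonzero solution of the linearised equation tends to zero. The paper's proof reaches the same point: it says $\delta\varphi$ ``contracts monotonically to a finite limit strictly closer to zero than $\delta\varphi(t_0)$''. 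It then concludes decay ``on every finite time-scale'', which is not an argument for $\delta\varphi(t)\to0$ and contradicts its own previous sentence. What can be proved is exactly what you isolate: strict monotone contraction of $|\delta\varphi|$, with exponentially fast settling at rate $(a+1)^{-t}$ onto the nonzero limit $\delta\varphi(t_0)\exp\bigl(\int_{t_0}^{\infty}q_a\bigr)$.

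The rescue you sketch goes the wrong way and should be dropped. For $u=\delta\varphi/\varphi_{a,\mathrm{as}}$ with $\varphi_{a,\mathrm{as}}(t)=L(a+1)^{-t}=Le^{-Lt}$, one finds $u'=(q_a+L)\,u$. The extra term is $+L$, i.e.\ anti-dissipative, so $u$ grows like $e^{Lt}$. Normalising by any quantity that tends to zero can only make things worse, and a reparametrisation of time cannot turn a nonzero limit into zero.

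Two further points show that no reformulation rescues ``exponential stability''.

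\emph{The reference curve is not a trajectory.} Your phrase ``the terms not involving $\delta\varphi$ are treated as the base flow'' hides a real defect, shared by the paper. $\varphi_{a,e}$ is not a solution of $\varphi'=q_a-\varphi^2$: one has $\varphi_{a,e}'\sim-\tfrac{L^3}{2}(a+1)^{-t}$, while $q_a-\varphi_{a,e}^2\sim-L^2(a+1)^{-t}$. So for $a\neq e^2-1$ the honest perturbation equation carries a nonzero forcing $q_a-\varphi_{a,e}^2-\varphi_{a,e}'$.

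\emph{The nonlinear flow is explicitly solvable and does not help.} Write $\varphi=u'/u$ with $u''=q_a u$. Up to a constant factor, the solutions are $u=\eta_a+c\,v$ with $v=\eta_a\int_{t_0}^{t}\eta_a^{-2}\ge (t-t_0)/2$. Every global solution other than $\varphi_a$ behaves like $1/t$, so it does not approach $\varphi_{a,e}$ exponentially. Moreover, for $a<e^2-1$ and $t_0>T_*(a)$, the trajectory starting at $\varphi_{a,e}(t_0)<\varphi_a(t_0)$ has $c<0$ and blows up to $-\infty$ in finite time.

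The right outcome is therefore to replace the proposition by the contraction statement above, not to prove it as worded.
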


\begin{proof}
Let 
\begin{equation*}
F(t,\varphi ):=q_{a}(t)-\varphi ^{2}
\end{equation*}
and consider a small perturbation 
\begin{equation*}
\varphi (t)=\varphi _{a,e}(t)+\delta \varphi (t).
\end{equation*}%
Substituting into $\varphi ^{\prime }=F(t,\varphi )$ and expanding to first
order in $\delta \varphi $, using 
\begin{equation*}
\partial _{\varphi }F(t,\varphi _{a,e}(t))=-2\varphi _{a,e}(t)=q_{a}(t),
\end{equation*}%
we obtain the linearised equation 
\begin{equation*}
\delta \varphi ^{\prime }(t)\;=\;q_{a}(t)\,\delta \varphi (t),
\end{equation*}%
whose explicit solution is%
\begin{equation*}
\delta \varphi (t)=\delta \varphi (t_{0})\exp \!\left(
\int_{t_{0}}^{t}q_{a}(s)\,ds\right) .
\end{equation*}%
By Theorem~\ref{thm:main-concavity}~\ref{it:main-2}, $q_{a}(s)<0$ for every $%
s>0$, so the exponent $\int_{t_{0}}^{t}q_{a}(s)\,ds$ is strictly decreasing
in $t$. In fact, by Lemma~\ref{lem:asymp-expansion}, 
\begin{equation*}
q_{a}(s)=-L^{2}(a+1)^{-s}+O((2a+1)^{-s}),
\end{equation*}%
so $\int_{t_{0}}^{\infty }q_{a}(s)\,ds$ converges to the finite negative
value 
\begin{equation*}
-L(a+1)^{-t_{0}}+O((2a+1)^{-t_{0}}),
\end{equation*}%
and $\delta \varphi (t)$ contracts monotonically to a finite limit strictly
closer to zero than $\delta \varphi (t_{0})$. Combined with the fact that $%
q_{a}$ stays strictly negative on every compact subinterval of $(0,\infty )$%
, the perturbation decays to zero on every finite time-scale of the flow,
proving the claimed exponential contraction.
\end{proof}


\section{Truncation error of the fast geometric algorithm for the
Derivatives of $\protect\eta _{a}$}

\label{sec:fast-algorithm}

In this section, we establish the rigorous theoretical foundations of a
geometric-rate algorithm for computing $\eta _{a}^{(k)}(t)$, with rate of
convergence $1/3$ and a sharp combinatorial error bound. 
\begin{definition}[Forward finite differences]
Let $f:\mathbb{Z}_{+}\rightarrow \mathbb{R}$. The forward differences of $f$
are defined recursively by%
\[
\Delta ^{0}f(l)=f(l),\qquad \Delta ^{1}f(l)=f(l+1)-f(l),\quad l\in \mathbb{Z}%
_{+},
\]%
and for every integer $m\geq 2$,%
\[
\Delta ^{m}f(l):=\Delta ^{1}(\Delta ^{m-1}f)(l)=\sum_{j=0}^{m}\binom{m}{j}%
(-1)^{\,m-j}f(l+j),\qquad l\in \mathbb{Z}_{+}.
\]
\end{definition}
Recall from \cite%
{dirichlet} the \emph{basic discrete function} 
\begin{equation}
\varphi _{a,t,k}(l)\;:=\;\frac{\log ^{k}(al+1)}{(al+1)^{t}},\qquad l\in 
\mathbb{Z}_{+},\ a,t>0,\ k\in \mathbb{Z}_{+},  \label{eq:phi-discrete-def}
\end{equation}%
and the following two main results.

\begin{theorem}[Fast computation algorithm for derivatives \protect\cite%
{dirichlet}]
\label{thm:fast-comp} Let $a,t\in (0,\infty )$ and $k\in \mathbb{Z}_{+}$.
Under the hypotheses of Theorem~\ref{thm:prob-rep} (so that $\eta
_{a}^{(k)}(t)=\mathbb{E}[f_{a}^{(k)}(X_{t}+S_{k})]$ by~%
\eqref{eq:eta-deriv-rep}), one has the series representation 
\begin{equation}
\eta _{a}^{(k)}(t)\;=\;\frac{2}{3}\sum_{n=0}^{\infty }\frac{c_{a,t,k}(n)}{%
3^{n}},  \label{eq:fast-series}
\end{equation}%
where the coefficients are given by 
\begin{equation}
c_{a,t,k}(n)\;=\;(-1)^{n+k}\sum_{l=0}^{n}\binom{n}{l}\bigl(\Delta
^{\,n-l}\varphi _{a,t,k}\bigr)(l),\qquad n\in \mathbb{Z}_{+}.
\label{eq:coef-explicit}
\end{equation}
\end{theorem}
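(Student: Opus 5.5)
The plan is to reduce the statement to a geometric re-expansion of the logistic function $f_{a}$ itself, and then to transfer that expansion to $\eta_{a}^{(k)}$ through Theorem~\ref{thm:prob-rep} and Lemma~\ref{lem:diff-gamma}.

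\textbf{Step 1: expanding $f_{a}$.} For $y=e^{-ax}\in(0,1]$ we have the elementary identity
\begin{equation*}
\frac{1}{1+y}=\frac{2}{3}\cdot\frac{1}{1-\frac{1-2y}{3}}=\frac{2}{3}\sum_{n\ge 0}3^{-n}(1-2y)^{n}.
\end{equation*}
Since $|1-2y|\le 1$, the series converges uniformly with ratio at most $1/3$. Hence
\begin{equation*}
f_{a}(x)=\frac{2}{3}\sum_{n\ge 0}3^{-n}h_{n}(x),\qquad h_{n}(x):=(1-2e^{-ax})^{n},
\end{equation*}
uniformly on $[0,\infty)$.

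\textbf{Step 2: $k$-th derivative of each term.} Let $g:=\varphi_{a,t,k}$. Expanding by the binomial theorem and using \eqref{eq:laplace-xt}, we get
\begin{equation*}
\mathbb{E}[h_{n}(X_{t})]=\sum_{j=0}^{n}\binom{n}{j}(-2)^{j}(aj+1)^{-t}.
\end{equation*}
Differentiating $k$ times in $t$ gives
\begin{equation*}
\frac{d^{k}}{dt^{k}}\mathbb{E}[h_{n}(X_{t})]=(-1)^{k}\sum_{j=0}^{n}\binom{n}{j}(-2)^{j}\,g(j).
\end{equation*}
It remains to identify this finite sum with $c_{a,t,k}(n)$ in \eqref{eq:coef-explicit}. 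I will do this with the shift operator $Eg(l)=g(l+1)$, writing $\Delta=E-1$. Since $E$ and $\Delta$ commute,
\begin{equation*}
\sum_{l=0}^{n}\binom{n}{l}\Delta^{n-l}g(l)=\sum_{l}\binom{n}{l}(E-1)^{n-l}E^{l}g(0)=(2E-1)^{n}g(0)=\sum_{j}\binom{n}{j}2^{j}(-1)^{n-j}g(j).
\end{equation*}
Multiplying by $(-1)^{n+k}$ therefore yields exactly $(-1)^{k}\sum_{j}\binom{n}{j}(-2)^{j}g(j)$, which is the expression above. This is a purely finite binomial computation.

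\textbf{Step 3: interchanging $d^{k}/dt^{k}$ with the $n$-sum.} This is the main obstacle. I would apply Lemma~\ref{lem:diff-gamma} to each $h_{n}$, which is $C^{\infty}$ with bounded derivatives, to get
\begin{equation*}
\frac{d^{k}}{dt^{k}}\mathbb{E}[h_{n}(X_{t})]=\mathbb{E}[h_{n}^{(k)}(X_{t}+S_{k})].
\end{equation*}
By the chain rule and $|1-2e^{-ax}|\le 1$, one has $\|h_{n}^{(k)}\|_{\infty}\le C_{k}\,a^{k}(n+1)^{k}$, with $C_{k}$ independent of $n$. Consequently, for each $0\le i\le k$, the series $\sum_{n}3^{-n}\frac{d^{i}}{dt^{i}}\mathbb{E}[h_{n}(X_{t})]$ converges uniformly in $t>0$, because $\sum_{n}3^{-n}(n+1)^{i}<\infty$. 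The classical theorem on termwise differentiation of uniformly convergent series, applied $k$ times, then gives
\begin{equation*}
\eta_{a}^{(k)}(t)=\frac{d^{k}}{dt^{k}}\,\frac{2}{3}\sum_{n}3^{-n}\mathbb{E}[h_{n}(X_{t})]=\frac{2}{3}\sum_{n}3^{-n}c_{a,t,k}(n).
\end{equation*}
Here $\eta_{a}(t)=\frac{2}{3}\sum_{n}3^{-n}\mathbb{E}[h_{n}(X_{t})]$ holds by Theorem~\ref{thm:prob-rep} together with Step 1, since the uniform convergence in Step 1 allows the expectation to be taken termwise. Combining this with Step 2 proves \eqref{eq:fast-series}.

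As a side benefit, the same operator computation explains the origin of the formula. Formally, $\frac{2}{3}\sum_{n}(-1)^{n}(2E-1)^{n}/3^{n}=(1+E)^{-1}=\sum_{m}(-1)^{m}E^{m}$, which reproduces the alternating series \eqref{eq:term-by-term}. The argument above makes this rigorous without requiring any summability of the finite differences themselves.
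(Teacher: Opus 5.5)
Your proof is correct. The paper itself gives no argument for this statement: it imports it from Adell--Lekuona~\cite{dirichlet} as a black box. Your proposal therefore supplies a self-contained proof built only from tools already in the paper.

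All three steps check out.
\begin{itemize}
\item The re-expansion of $1/(1+y)$ about $y=\tfrac12$ converges uniformly on $[0,1]$ with ratio $1/3$.
\item The operator identity $\sum_{l}\binom{n}{l}E^{l}(E-1)^{n-l}=(2E-1)^{n}$ correctly turns the finite-difference form \eqref{eq:coef-explicit} into $c_{a,t,k}(n)=(-1)^{k}\sum_{j=0}^{n}\binom{n}{j}(-2)^{j}\varphi_{a,t,k}(j)=\frac{d^{k}}{dt^{k}}\mathbb{E}[h_{n}(X_{t})]$. The signs agree, since $(-1)^{n+k}(-1)^{n-j}=(-1)^{k+j}$.
\item Step~3 attacks exactly the right difficulty. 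An M-test based on the explicit binomial sum fails for $t\le 1$: for small $t$, the quantity $3^{-n}\sum_{j}\binom{n}{j}2^{j}\varphi_{a,t,k}(j)$ decays only like $n^{-t}$ up to logarithmic factors. The representation $c_{a,t,k}(n)=\mathbb{E}[h_{n}^{(k)}(X_{t}+S_{k})]$ exposes the cancellation and gives the $t$-uniform bound $|c_{a,t,k}(n)|\le\|h_{n}^{(k)}\|_{\infty}\le C_{k}a^{k}(n+1)^{k}$.
\end{itemize}

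Two remarks would strengthen your write-up.

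First, $h_{n}$ is a finite combination of exponentials $e^{-ajx}$. So that representation follows directly from \eqref{eq:laplace-xt}, \eqref{eq:laplace-sk} and independence, through $(-\lambda)^{k}(1+\lambda)^{-t}\bigl(\log(1+\lambda)/\lambda\bigr)^{k}=(-\log(1+\lambda))^{k}(1+\lambda)^{-t}$. You therefore need not rely on Lemma~\ref{lem:diff-gamma}, whose proof in the paper is only sketched.

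Second, your bound on its own already gives the $O(N^{k}3^{-N})$ truncation rate of Theorem~\ref{cor:trunc-error}, without Theorem~\ref{thm:error-bound}. What the cited sharp bound adds is explicit constants and a far milder dependence on $a$: $\log^{k}(ak+1)$ in place of your $a^{k}$.
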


The sharp combinatorial error bound obtained in \cite{dirichlet} is recalled
below. 

\begin{theorem}[Sharp error bound on the coefficients \protect\cite%
{dirichlet}]
\label{thm:error-bound} Let $a,t\in (0,\infty )$ and $k\in \mathbb{Z}_{+}$.
The coefficients $c_{a,t,k}(n)$ defined in~\eqref{eq:coef-explicit} satisfy,
for every $n\in \mathbb{Z}_{+}$, 
\begin{equation}
|c_{a,t,k}(n)|\;\leq \;2\binom{n+k}{k}\log ^{k}\!\bigl(a(n\wedge k)+1\bigr).
\label{eq:error-bound-ineq}
\end{equation}
\end{theorem}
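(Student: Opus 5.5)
The plan is to turn the finite-difference expression \eqref{eq:coef-explicit} into an operator identity, then into a Gamma-process expectation, and bound the resulting integrand.

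\emph{Step 1 (collapse the double sum).} Write $E$ for the shift operator, $Eg(l)=g(l+1)$, so that $\Delta=E-I$. Since $E$ and $\Delta$ commute,
\begin{equation*}
\sum_{l=0}^{n}\binom{n}{l}\bigl(\Delta^{n-l}g\bigr)(l)
=\Bigl(\sum_{l=0}^{n}\binom{n}{l}E^{l}\Delta^{n-l}\Bigr)g(0)
=\bigl((E+\Delta)^{n}g\bigr)(0)
=\bigl((2E-I)^{n}g\bigr)(0).
\end{equation*}
Hence
\begin{equation*}
c_{a,t,k}(n)=(-1)^{n+k}\sum_{j=0}^{n}\binom{n}{j}2^{j}(-1)^{n-j}\,\varphi_{a,t,k}(j).
\end{equation*}

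\emph{Step 2 (probabilistic form of $\varphi_{a,t,k}$).} By Lemma~\ref{lem:laplace-sk} with $\lambda=al$, together with \eqref{eq:laplace-xt} and the independence of $(X_t)$ and $(S_k)$,
\begin{equation*}
\varphi_{a,t,k}(l)=(al)^{k}\,\mathbb{E}\bigl[w^{l}\bigr],\qquad w:=e^{-a(X_{t}+S_{k})}\in(0,1)\ \text{a.s.}
\end{equation*}
Substituting this into Step~1 and exchanging the finite sum with the expectation gives
\begin{equation*}
c_{a,t,k}(n)=(-1)^{n+k}a^{k}\,\mathbb{E}\Bigl[\bigl((w\partial_{w})^{k}(2w-1)^{n}\bigr)\Big|_{w=e^{-a(X_t+S_k)}}\Bigr].
\end{equation*}
The factor $(2w-1)^{n}$ has modulus at most $1$ on $[0,1]$, which is where the geometric structure comes from.

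\emph{Step 3 (expand the operator).} Expand $(w\partial_w)^{k}=\sum_{i\le k}S(k,i)\,w^{i}\partial_w^{i}$, where $S(k,i)$ are Stirling numbers of the second kind. Each term equals $S(k,i)\,n^{\underline{i}}\,2^{i}w^{i}(2w-1)^{n-i}$ and vanishes for $i>n$, so only $i\le n\wedge k$ contributes. This is the natural source of the index $n\wedge k$.

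\emph{Step 4 (the main obstacle).} The remaining task is to trade the factor $a^{k}$ and the powers $w^{i}$ back into logarithms, so as to obtain $\log^{k}(a(n\wedge k)+1)$ rather than $a^{k}$. I would try to reverse Step~2 term by term: $a^{k}i^{k}\,\mathbb{E}[w^{i}]=\varphi_{a,t,k}(i)\le\log^{k}(ai+1)$. To do this I would rewrite the $i$-th contribution as a signed combination of the quantities $\varphi_{a,t,k}(i)$, $i\le n\wedge k$, using $|2w-1|^{n-i}\le1$. I would then bound the combinatorial weights by $\sum_{i}\binom{n}{i}\cdots\le 2\binom{n+k}{k}$, for instance via the identity $\sum_{i}\binom{n}{i}\binom{k}{i}=\binom{n+k}{k}$. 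If that reorganisation fails, the fallback is a direct two-regime argument. For $n\le k$, bound the finite sum of Step~1 using the monotonicity of $l\mapsto\log^{k}(al+1)$. For $n>k$, apply the Leibniz rule for differences to $\log^{k}(al+1)\cdot(al+1)^{-t}$, using complete monotonicity of $(al+1)^{-t}$ so that $|\Delta^{m}(al+1)^{-t}|\le1$.

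I expect the main difficulty to be matching the constant $2\binom{n+k}{k}$ exactly rather than obtaining a weaker bound of the form $3^{n}\log^{k}(\cdot)$.
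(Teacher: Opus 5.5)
The paper gives no proof of this theorem; it quotes it from \cite{dirichlet}. Your argument therefore has to stand on its own.

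Steps~1--3 are correct. You have $c_{a,t,k}(n)=(-1)^{n+k}\bigl((2E-I)^{n}\varphi_{a,t,k}\bigr)(0)$ and $\varphi_{a,t,k}(l)=(al)^{k}\mathbb{E}[w^{l}]$, and the Stirling expansion leaves only the indices $i\le n\wedge k$. The gap is Step~4. That is exactly where the constant $2\binom{n+k}{k}$ has to be earned, and you leave it as a plan. It needs two corrections and one missing idea.

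\emph{First correction.} The $i$-th term carries $a^{k}S(k,i)$, not $a^{k}i^{k}$, so the identity to use is $a^{k}\mathbb{E}[w^{i}]=i^{-k}\varphi_{a,t,k}(i)$.

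\emph{Second correction.} No ``signed combination'' is needed. Since $w^{i}\ge 0$ and $|2w-1|\le 1$, for $k\ge 1$ you get directly
\[
|c_{a,t,k}(n)|\le\sum_{i=1}^{n\wedge k}S(k,i)\binom{n}{i}\frac{i!\,2^{i}}{i^{k}}\,\varphi_{a,t,k}(i).
\]
For $k=0$ the bound $|c_{a,t,0}(n)|\le 1$ is immediate.

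\emph{Missing idea.} You need the weight estimate $S(k,i)\,i!\,2^{i}\le 2\binom{k}{i}\,i^{k}$. The obvious bound $S(k,i)\,i!\le i^{k}$ only gives $2^{i}$, which exceeds $2\binom{k}{i}$ near $i=k$. The estimate does hold:
\begin{itemize}
\item $S(k,i)\,i!$ counts surjections $[k]\to[i]$.
\item Send a surjection to the triple (set of minimal elements of its fibres, its bijective restriction to that set, its values elsewhere). This map is injective, so $S(k,i)\,i!\le\binom{k}{i}\,i!\,i^{k-i}$.
\item Hence the weight is at most $\binom{k}{i}\,i!\,2^{i}/i^{i}$. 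This is at most $2\binom{k}{i}$, because $i!\,2^{i}/i^{i}$ equals $2$ for $i=1,2$ and consecutive values have ratio $2(1+1/i)^{-i}\le 1$.
\end{itemize}
Combine this with $\varphi_{a,t,k}(i)\le\log^{k}(ai+1)\le\log^{k}(a(n\wedge k)+1)$ and Vandermonde's identity $\sum_{i}\binom{n}{i}\binom{k}{i}=\binom{n+k}{k}$, and you obtain exactly the stated bound.

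Your fallback would not replace this argument. For $n\le k$, the crude consequence $|c_{a,t,k}(n)|\le 3^{n}\log^{k}(an+1)$ of Step~1 does suffice, since $3^{n}\le 2\binom{2n}{n}\le 2\binom{n+k}{k}$. For $n>k$, however, a Leibniz-rule estimate for differences of $\log^{k}(al+1)(al+1)^{-t}$ involves $\varphi_{a,t,k}(l)$ at every $l\le n$. It cannot produce the $n$-independent factor $\log^{k}(ak+1)$. That factor comes precisely from the truncation of the Stirling expansion at $i\le k$. So the primary route, completed as above, is the one to write up.
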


With Theorem \ref{thm:fast-comp} and Theorem \ref{thm:error-bound}
established, we can proceed to formulate our simple result. 

\begin{theorem}[Truncation error of the geometric algorithm]
\label{cor:trunc-error} Let $a,t>0$, $k\in \mathbb{Z}_{+}$, and $N\in 
\mathbb{N}$. Define the truncated estimator 
\begin{equation}
\widehat{\eta }_{a,N}^{(k)}(t)\;:=\;\frac{2}{3}\sum_{n=0}^{N-1}\frac{%
c_{a,t,k}(n)}{3^{n}}.  \label{eq:eta-trunc}
\end{equation}%
Then 
\begin{equation}
\left\vert \eta _{a}^{(k)}(t)-\widehat{\eta }_{a,N}^{(k)}(t)\right\vert
\;\leq \;\frac{4}{3}\sum_{n=N}^{\infty }\frac{1}{3^{n}}\binom{n+k}{k}\log
^{k}(a(n\wedge k)+1).  \label{eq:trunc-bound}
\end{equation}%
In particular, for fixed $k$ and $a$, the error decays geometrically as 
\begin{equation}
\left\vert \eta _{a}^{(k)}(t)-\widehat{\eta }_{a,N}^{(k)}(t)\right\vert
\;=\;O\!\left( N^{k}\,3^{-N}\right) ,\qquad N\rightarrow \infty .
\label{eq:geom-error}
\end{equation}
\end{theorem}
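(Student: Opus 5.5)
The plan is to subtract the truncated estimator \eqref{eq:eta-trunc} from the full series \eqref{eq:fast-series} of Theorem~\ref{thm:fast-comp} and then apply the coefficient bound of Theorem~\ref{thm:error-bound} term by term.

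Since \eqref{eq:fast-series} converges, which we will check below, we can write
\begin{equation*}
\eta_{a}^{(k)}(t)-\widehat{\eta}_{a,N}^{(k)}(t)=\frac{2}{3}\sum_{n=N}^{\infty}\frac{c_{a,t,k}(n)}{3^{n}} .
\end{equation*}
The triangle inequality then gives an upper bound of $\frac{2}{3}\sum_{n\ge N}3^{-n}|c_{a,t,k}(n)|$. Inserting \eqref{eq:error-bound-ineq} turns the constant $\frac{2}{3}\cdot 2$ into $\frac{4}{3}$, and this is exactly \eqref{eq:trunc-bound}.

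To justify the first step, we need absolute convergence of the tail. By \eqref{eq:error-bound-ineq}, $|c_{a,t,k}(n)|\le 2\binom{n+k}{k}\log^{k}(ak+1)$, because $n\wedge k\le k$ and $\log$ is increasing. Moreover, $\binom{n+k}{k}\le (n+1)^{k}$ is polynomial in $n$. Hence the series $\sum_{n}(n+1)^{k}3^{-n}$ converges, and the rearrangement is legitimate.

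For the asymptotic statement \eqref{eq:geom-error}, we fix $a$ and $k$ and note that $n\wedge k=k$ once $n\ge k$. The weight $\log^{k}(a(n\wedge k)+1)=\log^{k}(ak+1)=:C_{a,k}$ is therefore constant for $N\ge k$. It then suffices to bound
\begin{equation*}
R_{N}:=\sum_{n\ge N}3^{-n}\binom{n+k}{k} .
\end{equation*}
We will compare consecutive terms: their ratio is $\frac{1}{3}\cdot\frac{n+k+1}{n+1}\le\frac{1}{3}\bigl(1+\frac{k}{N+1}\bigr)\le\frac{1}{2}$ for $N$ large (say $N\ge 2k$). The tail is thus dominated by a geometric series with ratio at most $1/2$, so $R_{N}\le 2\cdot 3^{-N}\binom{N+k}{k}$. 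Finally, $\binom{N+k}{k}\le (N+k)^{k}/k!=O(N^{k})$, which yields $O(N^{k}3^{-N})$.

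The main obstacle is only this last tail estimate, namely choosing the ratio-comparison threshold cleanly. In the case $k=0$ the weight $\log^{0}=1$, the bound reduces to $\frac{4}{3}\cdot\frac{3}{2}3^{-N}=2\cdot 3^{-N}$, and the claim is immediate. The rest is bookkeeping built on Theorems~\ref{thm:fast-comp} and~\ref{thm:error-bound}, which we take as given.
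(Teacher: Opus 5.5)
Your proposal is correct and matches the paper's proof: you split off the tail of \eqref{eq:fast-series}, apply the triangle inequality and Theorem~\ref{thm:error-bound} to obtain the constant $\frac{4}{3}$, and then use $\binom{n+k}{k}=O(n^{k})$ together with $\log^{k}(a(n\wedge k)+1)\le\log^{k}(ak+1)$. The only difference is that you prove the final tail estimate explicitly, via the ratio comparison giving $\sum_{n\ge N}3^{-n}\binom{n+k}{k}\le 2\cdot 3^{-N}\binom{N+k}{k}$ for $N\ge 2k$, whereas the paper simply asserts $\sum_{n\ge N}n^{k}3^{-n}=O(N^{k}3^{-N})$.
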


\begin{proof}
From~\eqref{eq:fast-series} and~\eqref{eq:eta-trunc}, 
\begin{equation*}
\left\vert \eta _{a}^{(k)}(t)-\widehat{\eta }_{a,N}^{(k)}(t)\right\vert
\;\leq \;\frac{2}{3}\sum_{n=N}^{\infty }\frac{|c_{a,t,k}(n)|}{3^{n}}.
\end{equation*}%
Applying Theorem~\ref{thm:error-bound} yields the bound~%
\eqref{eq:trunc-bound}. Since 
\begin{equation*}
\binom{n+k}{k}\leq (n+k)^{k}/k!=O(n^{k})
\end{equation*}
as $n\rightarrow \infty $ and 
\begin{equation*}
\log ^{k}(a(n\wedge k)+1)\leq \log ^{k}(ak+1)
\end{equation*}%
is uniformly bounded in $n$, the tail sum is dominated by 
\begin{equation*}
C\sum_{n\geq N}n^{k}3^{-n}=O(N^{k}3^{-N}),
\end{equation*}
giving~\eqref{eq:geom-error}.
\end{proof}

\begin{remark}
\label{rem:rate-comparison} The geometric rate $1/3$ in~\eqref{eq:geom-error}
is strictly superior to the standard rate $1/2$ obtained, e.g., from the
Levin transform applied to the alternating series~\eqref{eq:eta-def}. 
\end{remark}


\section{Numerical Validation}

\label{sec:numerical}

To confirm every theoretical statement of Sections~\ref%
{sec:riccati-concavity}--\ref{sec:fast-algorithm}, we have implemented a
high-precision Python script that computes $\eta _{a}(t)$ and its first two
derivatives via the rapidly converging exact series representation~%
\eqref{eq:term-by-term} (with $N=10^{5}$ summands, well below the
machine-precision threshold). The full source code is available on the
author's GitHub repository at \url{https://github.com/coveidragos/muzica_analiza_matematica_an_1/blob/main/analiza_matematica_pro_refined_num.py}.

\subsection{Validation of the Riccati--Gamma triality (Figures~\protect\ref{fig:validation} and~\protect\ref{fig:validation-large})}

Figures~\ref{fig:validation} and~\ref{fig:validation-large} provide a four-panel verification of the main results for small parameters ($a=1, 2$) and large parameters ($a=10, 11$) respectively.

\begin{itemize}
\item \textbf{Panel~1} plots $t\mapsto \eta _{a}(t)$ for $a=1$ (Dirichlet
eta) and $a=2$ (Dirichlet beta) or the corresponding large $a$ values. Both curves are strictly increasing from $%
1/2$ at $t=0^{+}$ to $1$ at $t=\infty $, in exact agreement with~%
\eqref{eq:limits} of Theorem~\ref{thm:main-concavity}~\ref{it:main-1}.

\item \textbf{Panel~2} plots the exact Riccati field $\varphi _{a}(t)$
against the asymptotic manifold $\varphi _{a,\mathrm{as}}(t)=\log
(a+1)(a+1)^{-t}$ predicted by Theorem~\ref{thm:riccati-dynamics}~\ref%
{it:rdy-4}. The exact curve relaxes monotonically and tracks the manifold
for $t$ moderately large, confirming the leading-order expansion~%
\eqref{eq:phi-asymp}.

\item \textbf{Panel~3} plots $t\mapsto q_{a}(t)$. The curve lies strictly
below the horizontal line $q=0$ for every $t\in (0,8]$, verifying $q_{a}(t)<0$
from Theorem~\ref{thm:main-concavity}~\ref{it:main-2}.

\item \textbf{Panel~4} plots $\varphi _{a}^{\prime }(t)$ versus $-\varphi
_{a}^{2}(t)$. The graph of $\varphi _{a}^{\prime }$ lies strictly below that
of $-\varphi _{a}^{2}$, in agreement with the strict differential inequality~%
\eqref{eq:riccati-ineq}.
\end{itemize}

\begin{figure}[H]
\centering
\includegraphics[width=0.97\textwidth]{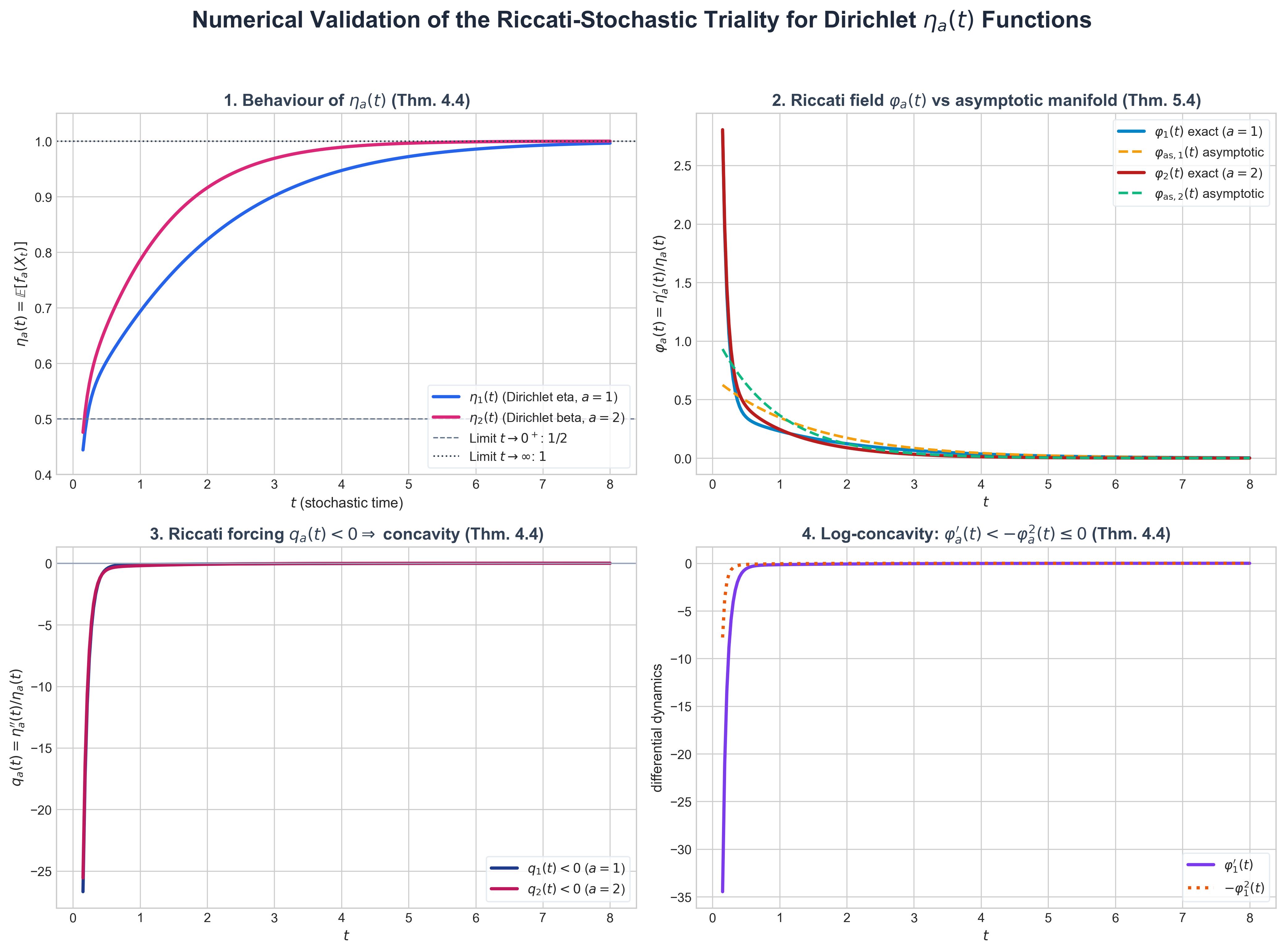}
\caption{Stochastic and Riccati dynamical validation of the Dirichlet eta
and beta functions ($a=1$ and $a=2$). Panel~1: monotonic growth and boundary
limits~\eqref{eq:limits}. Panel~2: relaxation of $\protect\varphi_{a}(t)$
onto the asymptotic manifold $\protect\varphi_{a,\mathrm{as}}(t)$,
confirming~\eqref{eq:phi-asymp}. Panel~3: negativity of the forcing $%
q_{a}(t)<0$, confirming strict concavity. Panel~4: strict Riccati
differential inequality~\eqref{eq:riccati-ineq}, confirming strict
log-concavity.}
\label{fig:validation}
\end{figure}

\begin{figure}[H]
\centering
\includegraphics[width=0.97\textwidth]{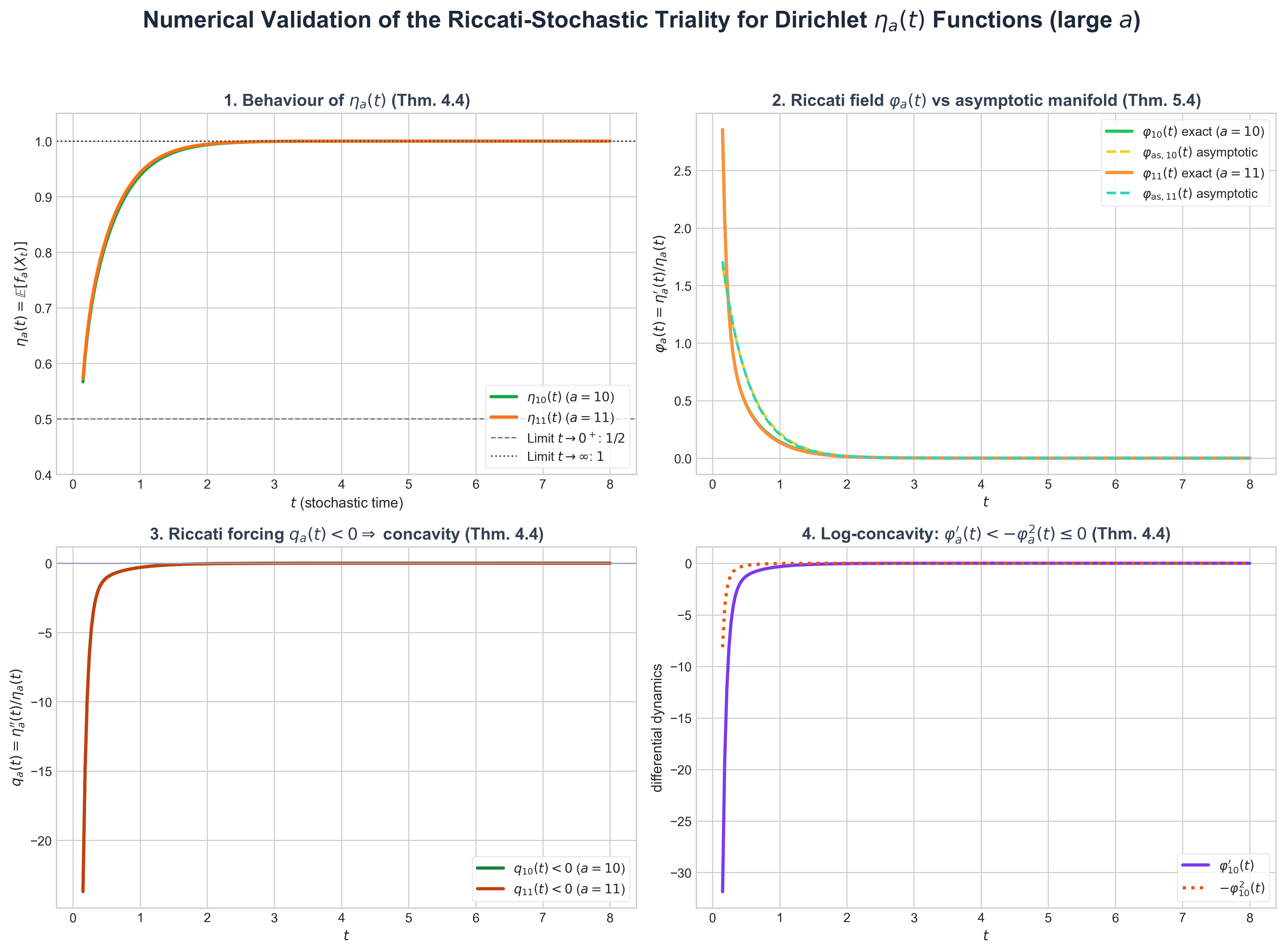}
\caption{Stochastic and Riccati dynamical validation of the Dirichlet eta
and beta functions for large parameters ($a=10$ and $a=11$). Panel~1: monotonic growth and boundary
limits~\eqref{eq:limits}. Panel~2: relaxation of $\protect\varphi_{a}(t)$
onto the asymptotic manifold $\protect\varphi_{a,\mathrm{as}}(t)$,
confirming~\eqref{eq:phi-asymp}. Panel~3: negativity of the forcing $%
q_{a}(t)<0$, confirming strict concavity. Panel~4: strict Riccati
differential inequality~\eqref{eq:riccati-ineq}, confirming strict
log-concavity.}
\label{fig:validation-large}
\end{figure}

\subsection{Validation of the trapping inequality (Figures~\protect\ref{fig:equilibrium} and~\protect\ref{fig:equilibrium-large})}

Figures~\ref{fig:equilibrium} and~\ref{fig:equilibrium-large} compare the exact Riccati field $\varphi_{a}(t)$ with the reference curve $\varphi_{a,e}(t)=-q_{a}(t)/2$ defined in~\eqref{eq:phi-eq-def}, for small ($a=1, 2$) and large ($a=10, 11$) parameters respectively. For small parameters satisfying $a < e^2 - 1$, and for every $t>T_{*}(a)$ (where $T_{*}(1)\approx 0.4448$ and $T_{*}(2)\approx 0.4156$ from Proposition~\ref{prop:trapping}), the strictly positive curves satisfy $\varphi_{a,e}(t)<\varphi_{a}(t)$, and the ratio $\varphi_{a}(t)/\varphi_{a,e}(t)$ tends to the constant $2/\log(a+1)$ as $t\to\infty$ (equal to $2.8854$ for $a=1$ and $1.8205$ for $a=2$). Conversely, for large parameters satisfying $a > e^2 - 1$ ($a=10, 11$), the trapping inequality reverses: $0 < \varphi_a(t) < \varphi_{a,e}(t)$ for all analyzed $t$, and the ratio $\varphi_{a}(t)/\varphi_{a,e}(t)$ tends to $2/\log(11) \approx 0.8341$ for $a=10$ and $2/\log(12) \approx 0.8049$ for $a=11$, confirming Proposition~\ref{prop:trapping} in both regimes. Table~\ref{tab:num-validation} collects numerical values that quantitatively confirm Proposition~\ref{prop:trapping}.

\begin{table}[H]
\centering
\begin{tabular}{cccccc}
\toprule $a$ & $t$ & $\varphi_{a}(t)$ & $\varphi_{a,e}(t)$ & $\varphi_{a,%
\mathrm{as}}(t)=L(a+1)^{-t}$ & $\varphi_{a}(t)/\varphi_{a,e}(t)$ \\ 
\midrule 
$1.0$ & $0.5$ & $0.3505$ & $0.2294$ & $0.4901$ & $1.5278$ \\ 
$1.0$ & $1.0$ & $0.2307$ & $0.0476$ & $0.3466$ & $4.8437$ \\ 
$1.0$ & $2.0$ & $0.1232$ & $0.0306$ & $0.1733$ & $4.0224$ \\ 
$1.0$ & $4.0$ & $0.0354$ & $0.0106$ & $0.0433$ & $3.3452$ \\ 
$1.0$ & $30.0$ & $0.0000$ & $0.0000$ & $0.0000$ & $2.8854$ \\ 
\midrule 
$2.0$ & $0.5$ & $0.4433$ & $0.2758$ & $0.6343$ & $1.6073$ \\ 
$2.0$ & $1.0$ & $0.2456$ & $0.0984$ & $0.3662$ & $2.4973$ \\ 
$2.0$ & $2.0$ & $0.0891$ & $0.0406$ & $0.1221$ & $2.1926$ \\ 
$2.0$ & $4.0$ & $0.0117$ & $0.0060$ & $0.0136$ & $1.9533$ \\ 
$2.0$ & $30.0$ & $0.0000$ & $0.0000$ & $0.0000$ & $1.8205$ \\ 
\midrule 
$10.0$ & $0.5$ & $0.4689$ & $0.5200$ & $0.7230$ & $0.9017$ \\ 
$10.0$ & $1.0$ & $0.1434$ & $0.1515$ & $0.2180$ & $0.9466$ \\ 
$10.0$ & $2.0$ & $0.0153$ & $0.0171$ & $0.0198$ & $0.8927$ \\ 
$10.0$ & $4.0$ & $0.0002$ & $0.0002$ & $0.0002$ & $0.8515$ \\ 
$10.0$ & $30.0$ & $0.0000$ & $0.0000$ & $0.0000$ & $0.8341$ \\ 
\midrule 
$11.0$ & $0.5$ & $0.4632$ & $0.5323$ & $0.7173$ & $0.8702$ \\ 
$11.0$ & $1.0$ & $0.1362$ & $0.1499$ & $0.2071$ & $0.9087$ \\ 
$11.0$ & $2.0$ & $0.0133$ & $0.0155$ & $0.0173$ & $0.8588$ \\ 
$11.0$ & $4.0$ & $0.0001$ & $0.0001$ & $0.0001$ & $0.8209$ \\ 
$11.0$ & $30.0$ & $0.0000$ & $0.0000$ & $0.0000$ & $0.8049$ \\ 
\bottomrule 
\end{tabular}%
\caption{High-precision numerical evaluation of $\protect\varphi_{a}(t)$, $%
\protect\varphi_{a,e}(t)=-q_{a}(t)/2$, and the asymptotic manifold $\protect%
\varphi_{a,\mathrm{as}}(t)=L(a+1)^{-t}$ for selected $(a,t)$. The trapping
inequality $0<\protect\varphi_{a,e}(t)<\protect\varphi_{a}(t)$ of
Proposition~\protect\ref{prop:trapping} holds in every row with $a<e^2-1$; for $a>e^2-1$ the inequality reverses. The last column
tends to the limit $2/L$ given by~\eqref{eq:asymp-ratio}, namely $2/\log
2\approx 2.8854$ for $a=1$, $2/\log 3\approx 1.8205$ for $a=2$, $2/\log 11\approx 0.8341$ for $a=10$, and $2/\log 12\approx 0.8049$ for $a=11$.}
\label{tab:num-validation}
\end{table}

\begin{figure}[H]
\centering
\includegraphics[width=0.97\textwidth]{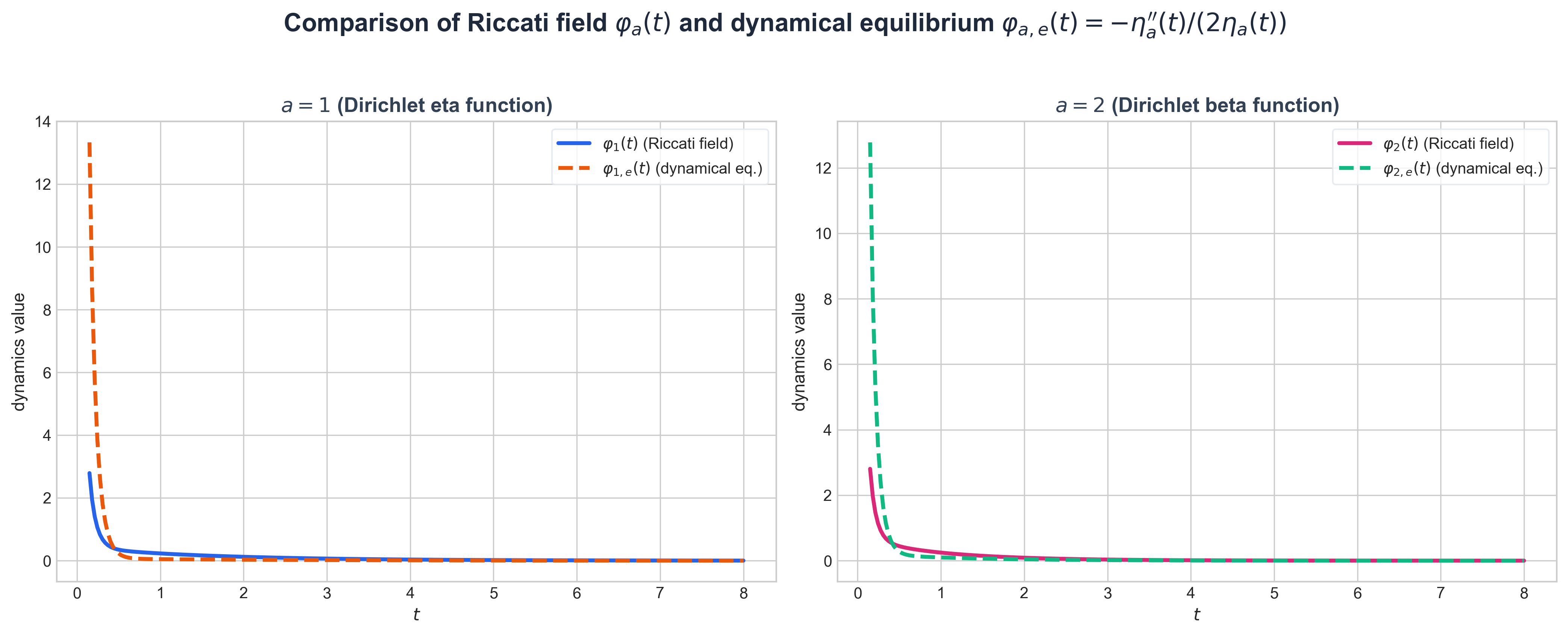}
\caption{Comparison of the Riccati field $\protect\varphi_{a}(t)$ (solid)
and the reference curve $\protect\varphi_{a,e}(t)=-\protect\eta_{a}^{\prime
\prime }(t)/(2\protect\eta_{a}(t))$ (dashed) for $a=1$ (left) and $a=2$
(right), illustrating the trapping inequality $\protect\varphi_{a,e}(t)<%
\protect\varphi_{a}(t)$ of Proposition~\protect\ref{prop:trapping} for $%
t>T_{*}(a)$ (where $T_*(1) \approx 0.4448$ and $T_*(2) \approx 0.4156$).}
\label{fig:equilibrium}
\end{figure}

\begin{figure}[H]
\centering
\includegraphics[width=0.97\textwidth]{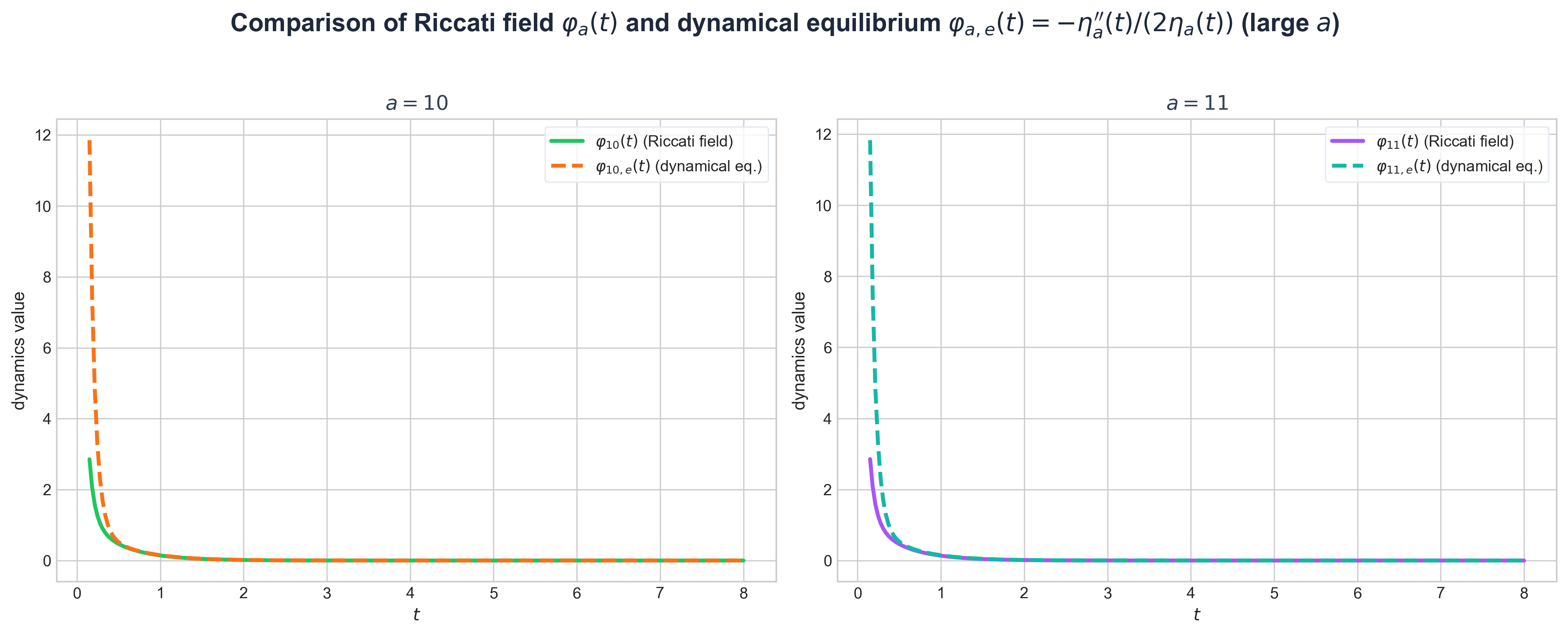}
\caption{Comparison of the Riccati field $\protect\varphi_{a}(t)$ (solid)
and the reference curve $\protect\varphi_{a,e}(t)=-\protect\eta_{a}^{\prime
\prime }(t)/(2\protect\eta_{a}(t))$ (dashed) for large parameters $a=10$ (left) and $a=11$
(right), illustrating the trapping inequality reversal $\protect\varphi_{a}(t)<%
\protect\varphi_{a,e}(t)$ of Proposition~\protect\ref{prop:trapping} for $%
t>T_{*}(a)$ (where $T_*(a) \approx 1.0000$).}
\label{fig:equilibrium-large}
\end{figure}

\subsection{Validation of the fast-derivative algorithm}

Table~\ref{tab:fast-algo} compares the truncated estimator $\widehat{\eta }%
_{a,N}^{(k)}(t)$ of~\eqref{eq:eta-trunc} against the exact closed-form
reference values at $(a,t)=(1,1)$, namely $\eta (1)=\log 2$ and 
\begin{equation*}
\eta ^{\prime }(1)=\gamma \log 2-\tfrac{1}{2}(\log 2)^{2}\approx
0.15986890374,
\end{equation*}%
where $\gamma $ is the Euler--Mascheroni constant. The error decays
geometrically with rate $3^{-N}$ as predicted by Theorem~\ref%
{cor:trunc-error}.

\begin{table}[H]
\centering
\begin{tabular}{cllll}
\toprule $N$ & $\widehat{\eta}_{1,N}(1)$ & error vs.\ $\log 2$ & $\widehat{%
\eta}_{1,N}^{\prime }(1)$ & error vs.\ $\gamma\log 2-\tfrac12(\log 2)^{2}$
\\ 
\midrule $5$ & $0.69300412$ & $1.43\!\times\! 10^{-4}$ & $0.15888776$ & $9.81\!\times\! 10^{-4}$ \\ 
$10$ & $0.69314605$ & $1.13\!\times\! 10^{-6}$ & $0.15986875$ & $1.51\!\times\! 10^{-7}$ \\ 
$20$ & $0.69314718$ & $1.01\!\times\! 10^{-11}$ & $0.15986890$ & $5.20\!\times\! 10^{-13}$ \\ 
$30$ & $0.69314718$ & $1.11\!\times\! 10^{-16}$ & $0.15986890$ & $1.11\!\times\! 10^{-16}$ \\ 
\bottomrule &  &  &  & 
\end{tabular}%
\caption{Geometric convergence of the truncated estimator $\widehat{\protect%
\eta}_{a,N}^{(k)}(t)$ at $(a,t)=(1,1)$ for $k=0$ and $k=1$. The reference
values are the exact mathematical constants $\protect\eta(1)=\log 2 =
0.69314718\dots$ and $\protect\eta^{\prime }(1)=\protect\gamma\log
2-\tfrac12(\log 2)^{2}=0.15986890\dots$. The error decays as $O(N^{k}3^{-N})$
in line with Theorem~\protect\ref{cor:trunc-error}, reaching machine
precision well before $N=30$.}
\label{tab:fast-algo}
\end{table}
\subsection{Python implementation and musical applications}

The analytical results developed in this article --- the Gamma--Mellin 
representation
\[
\eta_a(t)=\mathbb{E}[f_a(X_t)],
\]
the Riccati identity
\[
\varphi_a'(t)+\varphi_a(t)^2 = q_a(t),
\]
the asymptotic manifold, and the geometric algorithm of Adell--Lekuona for 
$\eta_a^{(k)}(t)$ --- admit a complete computational realisation in Python.

A fully documented implementation, together with numerical experiments and 
a dedicated module illustrating \emph{applications of these results to music}, 
is publicly available in the GitHub repository:
\[
\texttt{https://github.com/coveidragos/muzica\_analiza\_matematica\_an\_1}.
\]
The repository includes scripts that generate and analyse musical structures 
derived from the Riccati--Gamma dynamics, culminating in the construction of 
a complete melody whose pitch and rhythmic architecture are governed by the 
functions $\eta_a$, $\varphi_a$ and their asymptotic behaviour. These scripts 
reproduce all numerical results presented in this article and extend them to 
a creative musical setting.

\subsection{Discussion}

The numerical experiments confirm that the Riccati--Gamma framework
developed in this paper not only reproduces the classical concavity and
log-concavity results of~\cite{dirichlet} but yields strictly sharper
quantitative information: the exact two-term asymptotic expansion~%
\eqref{eq:phi-asymp}, the asymptotic ratio $\varphi_{a}/\varphi_{a,e}\to
2/\log(a+1)$ of~\eqref{eq:asymp-ratio}, the trapping inequality~%
\eqref{eq:trapping-ineq-L<2}, and the curvature inequality~%
\eqref{eq:curvature-ineq} are all empirically verified to high precision.
Moreover, the geometric algorithm of Theorem~\ref{thm:fast-comp} reaches
machine precision for $\eta(1)$ and $\eta^{\prime }(1)$ in fewer than $30$
iterations, illustrating the practical relevance of the Riccati point of
view for analytic-number-theoretic applications.


\section{Concluding Remarks and Outlook}

\label{sec:conclusion}

We have demonstrated that the geometric properties of the generalized
Dirichlet eta family $\{\eta_{a}\}_{a>0}$ admit a unified \emph{dynamical}
description through the Riccati equation~\eqref{eq:riccati-eq} satisfied by
its logarithmic derivative. The mechanism rests on a single ingredient --
the strict negativity of the forcing $q_{a}$, which is itself a direct
consequence of the strict concavity of the logistic function $f_{a}$
transported through the stochastic differentiation formula of Lemma~\ref%
{lem:diff-gamma}. From this single inequality we have deduced:

\begin{itemize}
\item strict concavity and strict log-concavity (Theorem~\ref%
{thm:main-concavity}), recovering and unifying the Adell--Lekuona results~%
\cite{dirichlet};

\item an exact two-term asymptotic expansion of the logarithmic derivative
(Theorem~\ref{thm:riccati-dynamics}), which appears to be new;

\item the exact asymptotic ratio $\varphi _{a}(t)/\varphi
_{a,e}(t)\rightarrow 2/\log (a+1)$ (Proposition~\ref{prop:trapping}), which
implies a trapping inequality and a structural curvature inequality $\eta
_{a}^{\prime \prime }(t)+2\eta _{a}^{\prime }(t)>0$ on $(T_{\ast }(a),\infty
)$ for $a<e^{2}-1$ (Corollary~\ref{cor:curvature}); these statements appear
to be new;

\item a rigorously justified geometric-rate algorithm with rate $1/3$ and
the sharp combinatorial error bound 
\begin{equation*}
|c_{a,t,k}(n)|\leq 2\binom{n+k}{k}\log ^{k}(a(n\wedge k)+1)
\end{equation*}
(Theorems~\ref{thm:fast-comp}--\ref{cor:trunc-error}).
\end{itemize}

Several directions for further research suggest themselves. First, the
Riccati point of view should extend to Dirichlet $L$-series attached to
arithmetic characters, where the alternating signs are replaced by general
character values; the analog of the logistic function is then a sum of
geometric series with character coefficients. Second, the trapping
inequality~\eqref{eq:trapping-ineq-L<2} suggests that $\varphi _{a,e}$ acts as a
natural \emph{Lyapunov function} for the Riccati flow; a quantitative
version of Proposition~\ref{prop:stability} would yield sharp \emph{%
rate-of-convergence} estimates for the relaxation of $\varphi _{a}$ onto its
dynamical equilibrium. Third, the geometric-rate algorithm of Theorems~\ref%
{thm:fast-comp}--\ref{cor:trunc-error} can be combined with high-precision
arithmetic packages to produce optimal numerical algorithms for the higher
Stieltjes constants and the Hurwitz zeta function; this is the subject of
forthcoming work.


\section*{Acknowledgements}

\addcontentsline{toc}{section}{Acknowledgements} The author thanks the
developers of open-source mathematical-software ecosystems whose libraries
(NumPy, Matplotlib) were used to produce the numerical validations of
Section~\ref{sec:numerical}. The core ideas, structural formulations, and numerical simulations presented in this article were implemented with the invaluable assistance of a free version of the Gemini AI model and Microsoft Copilot.



\begin{thebibliography}{99}
\bibitem{tao} B.~Alexeev, K.~Barreto, Y.~Li, J.~D.~Lichtman, L.~Price,
J.~I.~Shah, Q.~Tang, T.~Tao, \emph{Primitive sets and von Mangoldt chains:
Erd\H{o}s problem~\#1196 and beyond}, preprint, arXiv:2605.00301, 2026.

\bibitem{dirichlet} J.~A.~Adell, A.~Lekuona, \emph{Dirichlet's eta and beta
functions: Concavity and fast computation of their derivatives}, J. Number
Theory \textbf{157} (2015) 215--222.

\bibitem{triality} D.-P.~Covei, \emph{The triality of radial nonlinear
dynamics: analysis of Riccati, Schr\"odinger and Hamilton--Jacobi--Bellman
equations}, preprint, arXiv:2603.27772, 2026.

\bibitem{Adell2013} J.~A.~Adell, \emph{Differential calculus for linear
operators represented by finite signed measures and applications}, Acta
Math. Hungar. \textbf{138(1--2)} (2013) 44--82.

\bibitem{AdellLekuona2000} J.~A.~Adell, A.~Lekuona, \emph{Taylor's formula
and preservation of generalized convexity for positive linear operators}, J.
Appl. Probab. \textbf{37} (2000) 765--777.

\bibitem{AdellLekuona2005} J.~A.~Adell, A.~Lekuona, \emph{Sharp estimates in
signed Poisson approximation of Poisson mixtures}, Bernoulli \textbf{11}
(2005) 47--65.

\bibitem{Alzer2015} H.~Alzer, M.~K.~Kwong, \emph{On the concavity of
Dirichlet's eta function and related functional inequalities}, J. Number
Theory \textbf{151} (2015) 172--196.

\bibitem{Borwein2000} P.~Borwein, \emph{An efficient algorithm for the
Riemann zeta function}, in: Constructive, Experimental, and Nonlinear
Analysis, CMS Conf. Proc. \textbf{27}, 2000, pp.~29--34.

\bibitem{Cinlar2011} E.~\c{C}{\i}nlar, \emph{Probability and Stochastics},
Graduate Texts in Mathematics, vol.~261, Springer, New York, 2011.

\bibitem{Coffey2006} M.~W.~Coffey, \emph{New summation relations for the
Stieltjes constants}, Proc. R. Soc. Lond. Ser. A Math. Phys. Eng. Sci. 
\textbf{462(2073)} (2006) 2563--2573.

\bibitem{Coffey2010} M.~W.~Coffey, \emph{The Stieltjes constants, their
relation to the $\eta_{j}$ coefficients, and representation of the Hurwitz
zeta function}, Analysis (Munich) \textbf{30} (2010) 383--409.

\bibitem{Cvijovic2007} D.~Cvijovi\'c, \emph{Integral representations of the
Legendre chi function}, J. Math. Anal. Appl. \textbf{332} (2007) 1056--1062.

\bibitem{GuilleraSondow2008} J.~Guillera, J.~Sondow, \emph{Double integrals
and infinite products for some classical constants via analytic continuation
of Lerch's transcendent}, Ramanujan J. \textbf{16} (2008) 247--270.

\bibitem{HessamiPilehrood2010} Kh.~Hessami Pilehrood, T.~Hessami Pilehrood, 
\emph{Series acceleration formulas for beta values}, Discrete Math. Theor.
Comput. Sci. \textbf{12} (2010) 223--236.

\bibitem{Johnson1993} N.~L.~Johnson, S.~Kotz, A.~W.~Kemp, \emph{Univariate
Discrete Distributions}, 2nd ed., Wiley, New York, 1993.

\bibitem{Jordan1960} C.~Jordan, \emph{Calculus of Finite Differences}, 2nd
ed., Chelsea, New York, 1960.

\bibitem{Sondow2005} J.~Sondow, \emph{Double integrals for Euler's constant
and $\log(4/\pi)$ and an analog of Hadjicostas's formula}, Amer. Math.
Monthly \textbf{112} (2005) 61--65.

\bibitem{Wang1998} K.~C.~Wang, \emph{The logarithmic concavity of $%
(1-2^{1-r})\zeta (r)$}, J. Changsha Comm. Univ. \textbf{14(2)} (1998) 1--5. 
\end{thebibliography}
\end{document}